\newcommand{\eq}{\begin{equation}}
\newcommand{\en}{\end{equation}}
\newcommand{\giv}{\,|\,}
\newcommand{\prob}{\mathbb P}
\newcommand{\ex}{\mathbb E}
\newcommand{\Nat}{\Bbb N}
\newcommand{\ed}{ \stackrel{d}{=}}
\newcommand{\dd}{{\rm d}}
\newcommand{\1}{{\bf 1}}
\newcommand{\tbar}{\bar{t}}
\def\endpf{\hfill $\Box$ \vskip0.5cm}
\newcommand{\xv}{\mbox{\boldmath$x$}}
\newcommand{\Xv}{\mbox{\boldmath$X$}}
\newcommand{\Yv}{\mbox{\boldmath$Y$}}
\newcommand{\yv}{\mbox{\boldmath$y$}}
\newcommand{\inftyv}{\mbox{\boldmath$\infty$}}
\newtheorem{theorem}{\large Theorem}
\newtheorem{proposition}[theorem] {\large Proposition}
\newtheorem{lemma}[theorem]{\large Lemma}
\begin{document}
\title{Optimal Stopping with Rank-Dependent Loss}
\author{Alexander V. Gnedin\thanks{Postal address:
 Department of Mathematics, Utrecht University,
 Postbus 80010, 3508 TA Utrecht, The Netherlands. E-mail address: gnedin@math.uu.nl}~~~
}
\date{}
\maketitle

\begin{abstract}
\noindent
For $\tau$ a stopping rule adapted to a sequence of $n$ iid observations, we define the loss  to be 
$\ex\,[ q(R_\tau)]$, where $R_j$ is the rank of the $j$th observation, and $q$ is a nondecreasing function of the rank.
This setting covers both the best choice problem with $q(r)={\bf 1}(r>1)$,  and Robbins' problem  with  $q(r)=r$.
As $n\to\infty$ the stopping problem acquires a limiting form 
which is associated with the planar Poisson process.
Inspecting the limit    we establish bounds on the stopping value and
reveal qualitative features of the optimal rule. In particular, 
we show that the complete history dependence  persists in the limit, 
thus answering a question asked by
Bruss \cite{Bruss1} in the context of Robbins' problem.

\end{abstract}
\vskip0.2cm
\noindent
{\large Keywords:  optimal stopping, Robbins' problem, best-choice problem,  planar Poisson process}\\ 
\vskip0.2cm
\noindent
\large{2000 Mathematics Subject Classification: Primary 60G40, Secondary 60G70}

\vskip0.5cm
\noindent
{\bf 1. Introduction}
Let $X_1,\ldots,X_n$  be  a sequence of iid observations, 
sampled from the uniform distribution on $[0,1]$
(in the setup of this paper this assumption covers the general case 
of arbitrary continuous distribution). 
For $j\in [n]:=\{1,\ldots,n\}$
define  {\it final ranks}  as
$$R_{j}=\sum_{k=1}^n \1(X_k\leq X_j),$$
so $(R_1,\ldots,R_n)$ is an equiprobable  permutation of $[n]$.
Let $q:\Nat\to{\mathbb R}_+$ be a nondecreasing loss function with $q(1)<q(\infty):=\sup q(r)$.
In `secretary problems' \cite{SamSurv} one is typically interested in the large-$n$ behaviour of 
the minimum risk 
\eq\label{min}
V_n({\cal T}_n)=  \inf_{\tau\in {\cal T}_n} \ex[q(R_{\tau})],
\en
where  ${\cal T}_n$ is a given class of stopping rules with values in $[n]$.
Two classical loss functions are
\begin{itemize}
\item[(i)]
$q(r)=\1(r>1)$, for the best-choice problem
of maximising the probability of stopping at the minimum observation $X_{n,1}:=\min(X_1,\ldots,X_n)$,  
\item[(ii)]
$q(r)=r$, for 
 the problem of minimising
the expected rank. 
\end{itemize}

\par Many results are available for the case where ${\cal T}_n$  in (\ref{min})
is the class ${\cal R}_n$ of {\it rank rules}, which are  the stopping rules adapted 
to the  sequence of {\it initial ranks}
$$I_{j}=\sum_{k=1}^j \1(X_k\leq X_j)=\sum_{k=1}^j \1(R_k\leq R_j) ~~~~~~~(j\in [n]),$$
see \cite{FrankSam, Gianini, GianSam}. By independence of the initial ranks,
the optimal decision to stop at the $j$th observation depends  only on $I_j$.
The limiting risk
$V_\infty({\cal R}):=\lim_{n\to\infty}V_n({\cal R}_n)$
has interpretation in terms of 
a continuous-time stopping problem \cite{GianSam}.
Explicit formulas for  $V_\infty({\cal R})$ are known in some cases, 
for bounded and unbounded $q$, 
including the two classical loss functions and their generalisations \cite{BG, CMRS, FrankSam, Mucci1, Mucci2}.

\par  Much less explored are the problems where ${\cal T}_n$ is the class ${\cal F}_n$ of all stopping rules adapted to the natural 
filtration $(\sigma(X_1,\ldots,X_j),~j\in [n])$.
The principal difficulty here is that, for general $q$, the decision to stop on $X_j$ 
must depend not only on $X_j$ but also on the full vector $(X_{j-1,1},\ldots,X_{j-1,j-1})$ of order statistics of $X_1,\ldots,X_{j-1}$. In this sense, 
the optimal rule is {\it fully history-dependent}. 
Specifically, the ${\cal F}_n$-optimal rule has the form
\eq\label{optF}
\tau_n=\min\{j: X_j<h_{j}(X_{j-1,1},\ldots,X_{j-1,j-1})\}
\en
(with $h_{n,1}={\rm const},~h_{n,n}=1$),
where $(h_{n,j},~j\in [n])$ is a collection of functions with certain monotonicity properties. 
The dependence on history is reducible to the first $m-1$ order statistics if $q$ is {\it truncated} at $m$:
$q(r)=q(m)$ for $r\geq m$, but even then the analytical difficulties are severe.
The asymptotic value $V_\infty({\cal F}):=\lim_{n\to\infty}V_n({\cal F}_n)$ is known
explicitly only for the best-choice problem (hence for any $q$ truncated at $m=2$),
see \cite{BC} for the formula and history. 
{\it Robbins' problem} is the problem (\ref{min}) with ${\cal T}_n={\cal F}_n$ and the linear loss function $q(r)=r$,
see \cite{Assaf, Bruss1,  Bruss2, Bruss3}.

\par  The full history dependence makes  explicit analysis of the ${\cal F}_n$-optimal rule  hardly possible, thus
it is natural to seek for tractable  smaller classes of rules, with some kind of reduced dependence on the history.
Of course, the rank rules is one of such classes, and the optimal rule in ${\cal R}_n$ is also of the 
form (\ref{optF}), with the special feature that 
$h_{n,j}(x_1,\ldots,x_{j-1})=x_{\iota_n(j)}$ (for $x_0:=0\leq x_1\leq \ldots\leq x_{j-1}\leq 1$ and $j>1$), 
where $\iota_n(j)\in\{0,\ldots, j-1\}$ 
is some threshold value of $I_j$,
and $h_{n,1}=0$.
Another interesting possibility is to consider the  class
${\cal M}_n$ of {\it memoryless rules} of the form 
\eq\label{mless}
\tau=\min\{j:~X_j\leq f_j\},
\en
where  $(f_{n,j},~j\in [n])$ is an increasing sequence of thresholds. These rules are again of the form (\ref{optF}), this time 
with constants in the role 
functions $h_{n,j}$.
By familiar monotonicity arguments (which we recall in Section 4)
the limiting value
$V_\infty({\cal M}):= \lim_{n\to\infty} V_n({\cal M})$ (finite or infinite)  exists for arbitrary $q$.
See \cite{HillK, RubinS} for other classes of stopping rules with restricted dependence on history.

\par Memoryless rules were intensively studied in the context of 
Robbins' problem, in which case they outperform, asymptotically, the rank rules, meaning that 
$V_\infty({\cal M})<V_\infty({\cal R})$, see \cite{Assaf, Bruss2, Bruss3}.
In a recent survey of Robbins' problem Bruss \cite{Bruss1} stressed that a principal further step would be to either prove or disprove
that $V_\infty({\cal F})<V_\infty({\cal M})$.
Coincidence of the asymptotic values
$V_\infty({\cal F})=V_\infty({\cal M})$
 would imply that history dependence of the overall optimal rule were negligible, meaning that
deciding about some $X_j$ one should essentially focus on the current observation alone.

\par In this paper we extend the approach in 
\cite{GnFI, BC, LastR, Denis} by establishing that
the stopping problem in ${\cal F}_n$
has a limiting `$n=\infty$' form based on the planar Poisson process.
The interpretation of limit  risks in terms of the infinite model 
makes obvious the inequality
 $V_\infty({\cal F})<V_\infty({\cal M})$ for any $q$ provided the values are finite, 
which is true for both  the best-choice problem and Robbins' problem.
Thus the complexity does not disappear in the limit, and the full history dependence persists.
The finiteness is guaranteed  
 if $q(r)$ does not grow too fast, e.g.  $q(r)<c \exp(r^{\beta})$ ($0<\beta<1$) is enough.
In connection with  Robbins' problem, the limiting form 
was reported by the author at the INFORMS Conference on Applied Probability (Atlanta, 14-16 June 1995),
although the Poisson embedding had been exploited earlier 
 \cite{BrussRo}  in the analysis of rank  rules.
See \cite{KR} for a similar development in the problem of minimising $\ex\,[X_\tau]$.

\vskip0.5cm

\noindent
{\bf 2. A model based on the planar Poisson process}
Throughout we shall use the notation $\overline{\Nat}=\Nat\cup\{\infty\}$, and $\overline{\mathbb R}_+=[0,\infty]$ 
 for the compactified halfline. 
\par Let $\cal P$ be the scatter of atoms of  a
homogeneous Poisson point process in the strip $[0,1]\times \overline{\mathbb R}_+$,
with the intensity measure being the Lebesgue measure ${\rm d}t{\rm d}x$.
The infinite collection of atoms can be labelled
 $(T_1,X_{1,1}), (T_2,X_{1,2}),\ldots$  by increase of the second component.
Thus $\Xv_1:=(X_{1,1},X_{1,2},\ldots)$ is the increasing sequence of points of a unit Poisson process
on ${\mathbb R}_+$, the $T_r$'s are iid uniform $[0,1]$, and $\Xv_1$ and $(T_r, ~r=1,2,\ldots)$ are independent.
An atom  $(T_r,X_{1,r})\in {\cal P}$ will be understood 
as observation with value  $X_{1,r}$,  arrival time  $T_r$ and  
{\it final rank}  $r$. We define 
the {\it initial rank} of $(T_r,X_{1,r})$ as one plus the number of atoms in the open rectangle $]0,T_r[\,\times\,]0,X_{1,r}[$.
Note that the coordinate-wise ties among the atoms
only have probability zero. 

\par
To treat in a unified way both finite and infinite point configurations in the strip, we introduce
the  space $\cal X$ of all
 nondecreasing nonnegative sequences $\xv=(x_1,x_2,\ldots)$ where $x_r\in \overline{\mathbb R}_+$,
with the convention that 
a sequence with finitely many proper terms is always padded by infinitely many terms $\infty$.
In particular, the sequence $\varnothing:=(\infty,\infty,\ldots)$ is the sequence with no 
 finite terms.
The space $\cal X$ is endowed with the product topology inherited from  $\overline{\mathbb R}_+^\infty$.
We denote
$\xv\cup x$ the nondecreasing sequence obtained by inserting $x\in \overline{\mathbb R}_+$ in $\xv$, with understanding that
$\xv\cup \infty=\xv$. 
A strict partial order on $\cal X$ is defined by setting
$\xv\prec\yv$ if $x_r\leq y_r$ for $r=1,2,\ldots$ with at least one of the inequalities strict.
Clearly, $ \xv\cup x\prec \xv$ for $x<\infty$.

\par We regard $\Xv_1$ as the terminal state of a $\cal X$-valued process $(\Xv_t, t\in [0,1])$, 
where $\Xv_t$ is obtained by removing the entries $X_{1,r}$ of $\Xv_1$ with $T_{r}>t$.
Clearly,  $\Xv_t$ is an increasing sequence of atoms of 
 a Poisson  process on ${\mathbb R}_+$ with intensity measure 
$t{\rm d}x$.  
For $t\in \{T_r\}$ let $X_t, R_t, I_t$ be the value, the final rank and the initial rank
of the observation arrived at time $t$, respectively, and for $t\notin \{T_r\}$ let $X_t= R_t= I_t=\infty$. 
We have $\Xv_t=\Xv_{t-}\cup X_t$, so
$\Xv_t=\Xv_{t-}$
unless $t\in\{T_r\}$.

\par The process $(\Xv_t,~t\in[0,1])$ is Markovian, with right-continuous paths, the initial state $\Xv_0=\varnothing$ and the
jump-times $\{T_r\}$ which comprise a dense subset of $[0,1]$. 
Each component $(X_{t,i},~t\in [0,1])$ is a nonincreasing process, which satisfies
$X_{0+,i}=\infty$   and changes its value at every 
{\it $i$-record} (observation of initial rank $i$).
The jump-times of $(\Xv_{t,i},~t\in[0,1])$ are 
the arrival times of $i$-records; these occur 
according to a  Poisson process of intensity 
$t^{-1}\dd t$ independently  for distinct $i\in \Nat$,
as is known from the extreme-value theory.

\par Define a {\it stopping rule}
 $\tau$ to be a variable which may only assume one of the random values  $\{T_r\}\cup\{1\}$,
and  satisfies the measurability condition $\{\tau\leq t\}\in \sigma (\Xv_{s},~s\leq t)$  for $t\in [0,1]$.
The condition says that
the decision to stop not later than $t$ is determined by atoms ${\cal P}\cap([0,t]\times {\mathbb R}_+)$ arrived within the time interval
$[0,t]$.
Such rules are called in \cite[Definition 2.1]{KR} `canonical stopping times'.

\par 
We fix a nondecreasing nonnegative loss function $q$ satisfying $q(1)<q(\infty)$.
The risk incurred by  stopping rule  $\tau$ is assumed to be 
\eq\label{loss}
\ex[q(R_\tau)]=\sum_{r=1}^\infty q(r) ~\prob(\tau=T_r)+q(\infty)~\prob(\tau=1),
\en
where the terminal component is nonzero if and only if $\prob(\tau=1)>0$.
 Let $\cal F$ be the set of all stopping rules, and let
$V({\cal F})=\inf_{\tau\in{\cal F}}\ex[q(R_\tau)]$
be the minimal risk. 

\par The class $\cal R$ of {\it rank rules} is defined by
a more restrictive measurability condition
$\{\tau\leq t\}\in 
\sigma (I_{s},~s\leq t)$  for $t\in [0,1]$. That is to say,
by a rank rule  the information of observer at time $t$ amounts to the collection of arrival times on $[0,t]$ of $i$-records, for 
all $i\in \Nat$.
The optimal stopping problem in ${\cal R}$ is equivalent to 
`the infinite secretary problem' in
\cite{GianSam}.
By \cite[Theorem 4.1]{GianSam}
there exists an optimal rank rule
of the form $\tau=\inf\{t: I_t\leq \iota(t)\}$ ($\inf\varnothing=1$), where $\iota:[0,1[\to \Nat\cup\{0\}$ is a nondecreasing function.
For instance, in the best-choice problem $\iota(t)=\1(t\geq e^{-1})$.

\par  A {\it memoryless} rule is 
a stopping rule of the form
\eq\label{mem-f}
\tau=\inf\{t: X_t\leq f(t)\}  ~~~~~({\rm with~}\inf\varnothing=1),
\en
where $f:[0,1[\,\to {\mathbb R}$ is a  nondecreasing function. 
Denote $\cal M$ the class of memoryless rules, and denote
$V({\cal M})=\inf_{\tau\in{\cal M}}\ex[q(R_\tau)]$ its stopping value. 
 One could consider a larger class of stopping rules by which the decision to stop depends only on the current observation.
However,
the following lemma, analogous to \cite[Lemma 2.1]{Assaf}, shows that such
extension of $\cal M$ does not reduce the risk.

\begin{lemma}\label{Borel} Let $A\subset[0,1]\times{\mathbb R}_+$ be a Borel set.
For  the stopping rule 
$\tau=\inf\{t:~(t,X_t)\in A\}$ 
there exists a memoryless rule 
whose expected loss  is not larger than that of $\tau$.
\end{lemma}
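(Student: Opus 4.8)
The plan is to show that any "Borel-set" rule can be replaced by a memoryless rule of the form (\ref{mem-f}) without increasing the risk, by passing to a suitable "downward closure" of $A$ in the $x$-coordinate and then to its lower envelope. The key observation is that for a fixed arrival time $t\in\{T_r\}$, the distribution of the final rank $R_t$ given $X_t=x$ is stochastically increasing in $x$: conditionally on $X_t=x$, the rank $R_t$ equals $1$ plus the number of atoms of $\cal P$ in $]t,1]\times]0,x[$ plus the number of atoms in $[0,t[\,\times\,]0,x[$ that were not removed, and both counts are monotone in $x$. Since $q$ is nondecreasing, the conditional expected loss $\ex[q(R_t)\mid X_t=x]$ is nondecreasing in $x$. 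Hence, replacing the "acceptance region" at time $t$ — the section $A_t:=\{x:(t,x)\in A\}$ — by the down-set $[0,f(t)]$ with $f(t):=\sup\{x: \inf A_t \le x\}$ (equivalently, the Lebesgue measure of $A_t\cap[0,f(t)]$ equalling that of $A_t$, or more simply $f(t)$ defined so that stopping below $f(t)$ is at least as cheap on average) can only lower the conditional loss at the moment of stopping; the subtlety is that changing $A_t$ changes \emph{when} we stop, not just what we collect, so one has to argue globally.

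First I would set up a coupling/interchange argument in the spirit of \cite[Lemma 2.1]{Assaf}. Fix the Borel-set rule $\tau_A$. Define the candidate threshold function by taking, for each $t$, the essential infimum $f(t):=\mathrm{ess\,inf}\,A_t$ is too crude; instead I would use the exchange lemma iteratively: if at some time $t$ the section $A_t$ is not a down-interval $[0,f(t)]$, there is a "hole" — a value $x'<x''$ with $x'\notin A_t$, $x''\in A_t$ — and the rule accepts the more expensive observation $x''$ while rejecting the cheaper $x'$. Swapping the roles (accept on $[0,x'']$ instead) can only help at that instant, because accepting earlier at a \emph{lower} value yields a stochastically smaller final rank, and because a rejected low observation would in any case have to be "beaten" later, so committing to it now is safe. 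Making this precise requires the monotonicity of $x\mapsto\ex[q(R_t)\mid X_t=x]$ together with the fact that the process $(\Xv_t)$ has nonincreasing components, so that a smaller current value can never become a handicap relative to future arrivals.

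Second, having replaced every section by a down-interval $[0,f(t)]$, I would show $f$ may be taken nondecreasing. This is the standard secretary-problem monotonicity: if $f(s)>f(t)$ for some $s<t$, then the threshold is \emph{stricter} later even though later arrivals have (weakly) smaller values and smaller final ranks are harder to attain as time runs out; replacing $f$ on $[s,t]$ by the constant $\min$ or by a monotone rearrangement does not increase the risk. I would invoke the same interchange argument, now comparing stopping at time $s$ on a value $x\le f(t)$ versus waiting: the continuation value from time $s$ is nonincreasing in $s$ (fewer future atoms, and the thresholds only get looser after monotonization), so lowering the early threshold to match is never beneficial and raising it to $f(t)$ is never harmful. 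The resulting $f$ is nondecreasing, defines a memoryless rule by (\ref{mem-f}), and by construction $\ex[q(R_{\tau})]\le\ex[q(R_{\tau_A})]$.

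The main obstacle is the \emph{global} nature of step one: changing the acceptance section at a single time $t$ alters the entire downstream evolution of the stopping decision, so a naive pointwise comparison of conditional losses is not enough. The clean way around this is the measure-theoretic exchange argument of Assaf–Samuel-Cahn: represent the decision as a $\{0,1\}$-valued "randomization" and use the monotone likelihood ratio property of $(X_t,R_t)$ — namely that $q(R_t)$ conditioned on $X_t=x$ is stochastically increasing in $x$ — to perform a transport of the acceptance mass downward in $x$ at each fixed $t$, checking that the swapped rule remains a valid stopping rule (adapted, with values in $\{T_r\}\cup\{1\}$) and that the total expected loss is a monotone functional of the acceptance kernel. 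Once that transport lemma is in hand, both the "section is a down-interval" reduction and the monotonicity of $f$ follow by the same machinery. I would therefore state and prove the one-step transport inequality first, as it is the crux, and then assemble the two reductions as corollaries.
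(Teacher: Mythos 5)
Your outline has the same two-stage shape as the paper's proof (first normalise the vertical sections of $A$, then monotonise in $t$), and you correctly identify the central monotonicity fact that the conditional expected loss of stopping at $(t,x)$ is nondecreasing in $x$. But the proposal stops short of a proof at exactly the points you yourself flag as the crux. In the first stage you float several inequivalent definitions of $f(t)$ (essential infimum, a set of the same measure inside $[0,f(t)]$, ``accept on $[0,x'']$'') without settling on one, and the one that actually makes the global problem disappear is none of these as stated: one replaces each section $A_t$ by the interval $[0,|A_t|]$ of the \emph{same Lebesgue measure} adjacent to $0$. Because $\cal P$ is homogeneous, this equimeasurable rearrangement leaves the law of the stopping time $\tau$ unchanged (the intensity of arrivals into the acceptance set at time $t$ is still $|A_t|$), so only the value collected at the stopping moment changes, and it becomes stochastically smaller; combined with the monotonicity of the conditional loss in $x$ this settles the ``global'' objection you raise. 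Your alternative interchange (``accept on $[0,x'']$ instead'') enlarges the acceptance set, changes the law of $\tau$, and the justification that ``committing to a low observation now is safe'' is not valid for a general loss $q$ (it already fails in spirit for the best-choice problem, where stopping early on a low value can be strictly worse than waiting).

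The second stage has a similar gap. You appeal to ``the continuation value from time $s$ is nonincreasing in $s$'' and assert that raising an early threshold is never harmful; neither claim is proved, and for the arbitrary (non-optimal) thresholds under consideration the second is simply not a usable principle. The paper instead makes the comparison by an explicit coupling: discretise $[0,1]$ into $\delta$-intervals, and whenever two adjacent intervals carry threshold values in the wrong order, exchange both the threshold values \emph{and} the corresponding vertical strips of the Poisson scatter. The exchanged scatter is equal in law to $\cal P$, the two rules stop at the same value except on the event that both strips contain atoms below the thresholds, and on that event the monotone arrangement stops at a stochastically smaller value. Iterating and letting $\delta\to0$ gives a nondecreasing $f$. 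Finally, you omit the preliminary reduction to sets $A$ whose area over $[0,t]\times{\mathbb R}_+$ is finite for every $t<1$ (otherwise $\tau<s$ a.s.\ for some $s<1$ and one first shifts $A$), which is needed for the rearrangement and the $L^1$-approximation to make sense. In short: the architecture is right, but the one-step transport inequality you defer to the end is the entire content of the lemma, and the heuristics offered in its place do not substitute for the measure-preserving rearrangement and the strip-exchange coupling that carry the paper's argument.
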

\proof
It is sufficient to consider sets $A$ such that  the area of $A\cap([0,t]\times{\mathbb R}_+)$ is finite for every $t<1$.
Indeed, if the area  of  $A\cap([0,t]\times{\mathbb R}_+)$ is infinite for some $s<1$ then $\tau<s$ a.s.,
hence letting $A'$  to be $A\cap([0,s]\times{\mathbb R}_+)$ shifted by $1-s$ to the right we obtain a rule not worse than $\tau$.
Replace each vertical section of $A$ by an interval adjacent to $0$ of the same length, thus 
obtaining subgraph of a function $g$.
This preserves the distribution of the stopping rule and does not increase the risk, 
by the monotonicity of $q$. 
Break $[0,1]$ into intervals of equal size $\delta$ and approximate
$g$ (in $L^1$) by a right-continuous function $g_\delta$, constant on these intervals.
Suppose  on some adjacent intervals $[t,t+\delta[,~[t+\delta, t+2\delta[$ we have $g_\delta(t)> g_\delta(t+\delta)$.
Let $g_\delta'$ be 
another piecewise constant function with exchanged  values  
on these intervals, $g_\delta(t+\delta)$ and $g_\delta(t)$,
but  outside $[t,\,t+2\delta]$  coinciding with $g$.
Let ${\cal P}'$ be the scatter of atoms obtained by exchanging  the strips
$[t,t+\delta[\,\times {\mathbb R}_+$ and $[t+\delta,t+2\delta[\,\times {\mathbb R}_+$.
Obviously, 
 ${\cal P}'\ed{\cal P}$.
To compare two stopping rules $\tau$ and $\tau'$  defined as in (\ref{mem-f}),  but with $g_\delta$, respectively $g_\delta'$, in place of $f$, we consider 
the selected atom
$(\tau, X_\tau)$ as a function of ${\cal P}$, and consider $(\tau', X_{\tau'})$ as a function of ${\cal P}'$. 
It is easy to see that $X_\tau=X_{\tau'}$ unless $([t+\delta,t+2\delta[\,\times [0,g(t+\delta)])\cap{\cal P}\neq\varnothing$,
whereas in the latter case $X_{\tau'}$ is stochastically smaller than $X_\tau$.
The advantage comes from the event that each of the strips contains an atom below the graph of $g_\delta$. 
It follows that $\tau'$ does better.
Iterating this exchange argument, we see that 
the rule defined by $g_\delta$ is improved by a memoryless rule with
a piecewise constant function. 
Letting $\delta\to\ 0$ shows that one can reduce $A$ to a subgraph of a monotonic $f:[0,1[\to\overline{\mathbb R}_+$.
\endproof

Given  the initial rank $I_t=i$ and the value $X_t=x$ of some observation at time $t$,
the final rank of the atom $(t,x)$ is $i$ plus the number of atoms south-east of $(t,x)$, the latter being 
a Poisson variable with parameter $\bar{t}x$, where and henceforth 
$$\bar{t}:=1-t.$$ 
By  independence properties of $\cal P$,
the adapted loss incurred by  stopping at $(t,x)$ is   equal to $Q(\bar{t}x,i)$, where
\eq\label{Qser}
Q(\xi,i):=
\sum_{r=i}^\infty q(r)\,e^{-\xi}{\xi^{r-i}\over (r-i)!}
\en
For instance, $Q(\bar{t}x,i)=1-e^{-\bar{t}x}\1(i=1)$ 
in the best-choice problem, and
$Q(\bar{t}x,i)=\bar{t}x+i$ in  Robbins' problem.
The formula for $Q$ is  extended for infinite values of the arguments 
as $Q(\cdot,\infty)=Q(\infty,\cdot)=q(\infty)$. 
It is seen from the identity
$${\dd\left[ e^\xi Q(\xi,1)\right]\over\dd\xi^{i-1}}=e^\xi Q(\xi,i)$$
 that the series $Q(\cdot,i)$ have the same convergence radius for all $i$.

\vskip0.5cm
\noindent
{\bf 3. Memoryless rules and finiteness of the risk} 
For $\tau$ a memoryless rule (\ref{mem-f}) with monotone $f$,
 denote $L(f)=\ex\,[q(R_\tau)]$ the expected loss. 
Introduce the integrals
$$
F(t)=\int_0^t f(s)\,\dd s\,, ~~~~S(x)=\int_0^x f^{-1}(y)\,\dd y= xf^{-1}(x)-F(f^{-1}(x))\,,
$$
where $f^{-1}$ is the right-continuous inverse with $f^{-1}(x)=0$ for $x<f(0)$. 
Note that $\prob(\tau>t)=\exp(-F(t))$, and that given $\tau=t<1$ the law of $X_\tau$ is uniform on $[0,f(t)]$.
The formula for the risk follows by  
conditioning on the
location of the leftmost atom below the graph of $f$ and using the fact that
 the  configurations of atoms above the graph  and below it are independent:
\eq\label{m-rule-risk}
 L(f) =\int_0^1 e^{-F(t)}\,\dd t\int_0^{f(t)} Q(\bar{t}x+S(x),1)\,\dd x\,+ e^{-F(1)}\,q(\infty).
\en

\par Assuming that $F(1)=\infty$, so  the terminal part is $0$,  
computation of the first variation of $L(f)$ shows that 
an optimal $f$ must satisfy a rather complicated functional equation:
\begin{eqnarray}\label{varipr}
Q(f(t)-F(t),1)=~~~~~~~~~~~~~~~~~~~~~~~~~~~~~~~~~~~~~~~~~~~~~~~~~~~~~~~~~~~~~~~~~~~~~~~~~~~~~~~~~~~\\ \nonumber
\int_t^1\exp(F(t)-F(s))\dd s\left[\int_0^{f(t)}Q(S(x)+x\bar{s},1)\dd x+\int_{f(t)}^{f(s)} Q(S(x)+x\bar{s},2)\dd x\right].
\end{eqnarray}
\par 
A rough upper bound
\eq\label{m-rule-bound}
 L(f) \leq\int_0^1 e^{-F(t)}\,\dd t\int_0^{f(t)} Q(x,1)\,\dd x\,+ e^{-F(1)}\,q(\infty)
\en
follows from $\bar{t}x+S(x)\leq x$. 

\par The bound (\ref{m-rule-bound}) is computable for the loss functions
\eq\label{qff}
q(r)=(r-1)(r-2)\cdots(r-\ell)~~~~~~ (\ell\in\Nat),
\en
in which case we have a very simple formula $Q(\xi,1)=\xi^\ell$, 
and  (\ref{m-rule-bound}) becomes
$$L(f)\leq (\ell+1)^{-1}\int_0^1 e^{-F(t)} f(t)^{\ell+1}\dd t\,.$$
 Solving the variational problem for $F$ with boundary conditions $F(0)=0, ~F(1)=\infty$, we see that 
the minimal value  of the right-hand side is $(\ell+1)^\ell$, which is  attained by
the function $f(t)=(\ell+1)/(1-t)$.

\par It is instructive to directly analyse the memoryless rules with hyperbolic threshold
$$
f_b(t):={b\over 1-t}\, ~~~~~~(b>0)
$$
and $q$ as in (\ref{qff}).
We calculate  $e^{-F(t)}=(1-t)^b$ and $S(x)=(x-b-b\log(x/b))$ (for $x>f(0)=b$).
For $\ell=1$
integrating by parts in (\ref{m-rule-risk}) we obtain
\eq\label{L1}
L(f_b)={b\over 2}+{1\over b^2-1},
\en
which is finite for 
all $b>1$, with the minimum  $1.3318\cdots$  attained at $b=1.9469\cdots$
(which agrees with \cite[Example 4.2]{Assaf} where the minimum is $2.3318\cdots$
for the linear loss $q(r)=r$).
For $\ell=2$ 
\eq\label{L22}
L(f_b)= {b^3\over 3}+{2(b^4-2b^3+2b^2+6b-4)\over (b-2)(b-1)^2(b+1)(b+2)},
\en 
which is finite for all $b>2$, with minimum $4.4716\cdots$ at $b=2.96439\cdots$.
Formulas become more involved for larger $\ell$, a common feature being that 
$L(f_b)<\infty$ for $b>\ell$.
For $\ell=3$,   the minimum is $24.8061$ at 
$3.9734\cdots$.
For $\ell=4$, the minimum is $194.756\cdots$ at $b=4.979\cdots$.
The upper bound (\ref{m-rule-bound}) becomes
$$L(f_b)< \int_0^1 (1-t)^b\int_0^{b/(1-t)} x^\ell\dd x={b^{\ell+1}\over (\ell+1)(b-\ell)},$$
which attains  minimum at $b=\ell+1$ in agreement with what we have obtained above. 
\vskip0.5cm
\noindent
{\bf Remark.} Notably,
the memoryless rule with threshold $f_{\ell+1}$ is overall optimal in the related 
stopping problem $\ex[(X_\tau)^\ell]\to \inf$, for arbitrary $\ell>0$.
For $\ell=1$ we face here a variant of `Moser's problem' 
associated with $\cal P$
(see
\cite{Assaf, Bruss1, KR} and references therein).

\vskip0.5cm

\par  It can be read from \cite{Bruss1, Assaf, CMRS} that 
for the linear loss $q(r)=r$ we have
$V({\cal M})=\inf L(f)<V({\cal R})=3.8695\cdots$.

The minimiser of $L(f)$
is not known explicitly,
but some approximations to it can be read from
\cite{Assaf} (where they appear in the course of asymptotic analysis 
of the finite-$n$ Robbins' problem). 
We did not succeed to solve (\ref{varipr}) even
for the best choice problem, although there is a simple suboptimal  
 rule with constant threshold $f(t)=1.503\cdots$  achieving
$L(f)=1-0.517\cdots$  (to be compared with the  value $V({\cal F})=1-0.580\cdots$, see \cite[p. 682]{GnFI}) 
hence beating the rank rules: $V({\cal M})<V({\cal R})=1-0.368\cdots$.
\par It would be interesting to know for which $q$ the memoryless rules outperform the rank rules 
and if it is possible, for unbounded $q$, to have the memoryless risk finite while infinite for the rank rules.
We sketch some results in this direction.
 From the above elementary estimates 
$V({\cal M})<\infty$ provided  $q(r)<c\, r^\ell$ for some constants $c>0$, $\ell >0$.
For such $q$ the risk of rank rules is also finite.
Moreover, 
Mucci \cite[p. 426]{Mucci2} showed  that 
for the loss function 
$q(r)=r(r+1)\cdots (r+\ell-1)$ $(\ell\in \Nat)$ 
the minimum risk of rank rules is
$$V({\cal R})=\ell!\,\, 
\prod_{j=1}^\infty \left(1+{\ell+1\over j}\right)^{\ell/(\ell+j)}\,$$
(which extends the $\ell=1$ result from \cite{CMRS}).
For $\ell=2$ the formula yields
$33.260\cdots$, while
the $f_b$-rules do worse, with $\inf_b L(f_b)=38.068\cdots$
(as computed from (\ref{L1}) and (\ref{L22}) using the linearity of $L(f)$ in $q$).


\par In fact, $V({\cal M})<\infty$ for many loss fuctions growing much faster than polynomials.
\begin{proposition} If 
 $q(r)<c \exp(x^\beta)$ for some $c>0$ and $0<\beta<1$ then $V({\cal M})<\infty$.
\end{proposition}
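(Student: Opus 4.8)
\emph{Plan.} The strategy is to feed into the crude upper bound (\ref{m-rule-bound}) a sharp estimate of $Q(\cdot,1)$, and then choose a memoryless threshold $f$ that blows up at $t=1$ just fast enough to make the resulting integral converge.

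\emph{Step 1 (bounding $Q(x,1)$).} By (\ref{Qser}), $Q(x,1)=\ex\,[q(1+N_x)]$ with $N_x$ Poisson$(x)$, so the hypothesis gives $Q(x,1)<c\sum_{n\ge0}\e^{(n+1)^\beta}\e^{-x}x^n/n!$. I would split this Poisson sum at $n=Kx$ for a large constant $K$. For $n\le Kx$ one has $\e^{(n+1)^\beta}\le\exp\!\big(((K+1)x)^\beta\big)$ (for $x\ge1$), while the remaining partial Poisson sum is $\le1$; for $n>Kx$ the elementary inequality $\e^{s^\beta}\le C_\epsilon\,\e^{\epsilon s}$ (valid for all $s\ge0$ since $\beta<1$), together with Stirling's bound $x^n/n!\le(\e x/n)^n<(\e/K)^n$ and $\e^{-x}\le1$, shows that once $K>\e^{1+\epsilon}$ the tail sums to a constant. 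After adjusting constants to cover $x<1$ this yields
\[
Q(x,1)\ \le\ C_1\,\exp(\lambda x^\beta)\qquad(x\ge0),
\]
for constants $C_1,\lambda$ depending only on $c,\beta$; in particular $\int_0^M Q(x,1)\,\dd x\le C_1M\exp(\lambda M^\beta)$. (The Stirling step could be replaced by the Chernoff bound $\prob(N_x\ge Kx)\le\exp\!\big(-x(K\log K-K+1)\big)$.)

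\emph{Step 2 (choosing $f$).} I would take $f(t)=A(1-t)^{-\alpha}$ with $A>0$ arbitrary and $\alpha$ any number with $\alpha(1-\beta)>1$ (e.g.\ $\alpha=2/(1-\beta)$). Then $F(t)=\tfrac{A}{\alpha-1}\big((1-t)^{-(\alpha-1)}-1\big)$, so $F(1)=\infty$ and the terminal term in (\ref{m-rule-bound}) vanishes; with $u=1-t$ and Step~1,
\[
L(f)\ \le\ C_1A\int_0^1 u^{-\alpha}\exp\!\Big(-\tfrac{A}{\alpha-1}\big(u^{-(\alpha-1)}-1\big)+\lambda A^\beta u^{-\alpha\beta}\Big)\,\dd u.
\]
Since $\alpha-1>\alpha\beta$, the term $-\tfrac{A}{\alpha-1}u^{-(\alpha-1)}$ dominates $\lambda A^\beta u^{-\alpha\beta}$ as $u\down0$, so the exponent tends to $-\infty$ at rate $u^{-(\alpha-1)}$; the integrand is then bounded and continuous on $(0,1]$ and vanishes at $u=0$, so the integral is finite and $V({\cal M})\le L(f)<\infty$.

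\emph{Main obstacle.} The crux is Step~1. The elementary bound $Q(x,1)\le C\exp(\delta x)$ is too weak: it would only give $L(f)\lesssim\int_0^1\e^{-F(t)+\delta f(t)}\,\dd t$ (up to a polynomial factor), and for a power threshold $F(t)\asymp(1-t)^{-(\alpha-1)}$ is dominated by $\delta f(t)\asymp(1-t)^{-\alpha}$, so the integrand blows up; more structurally, $F\le f$ for every nondecreasing $f$, so growth of order $\e^{\delta x}$ cannot be absorbed. Poisson concentration is precisely what lowers the exponent from $x$ to $x^\beta$, and because $\beta<1$ one may then take $\alpha$ large enough that $\alpha-1>\alpha\beta$, so that $F$ outgrows $\lambda f^\beta$ and the estimate becomes summable.
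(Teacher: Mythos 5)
Your proof is correct and follows essentially the same route as the paper: plug the growth estimate for $Q(\cdot,1)$ into the bound (\ref{m-rule-bound}) with the threshold $f(t)=(1-t)^{-\alpha}$, $\alpha>(1-\beta)^{-1}$, and observe that $F(t)\asymp(1-t)^{-(\alpha-1)}$ dominates $\lambda f(t)^\beta\asymp(1-t)^{-\alpha\beta}$. The only difference is that you work out the Poisson concentration bound $Q(x,1)\le C_1\exp(\lambda x^\beta)$ explicitly, where the paper simply asserts $Q(x,1)=O(\exp(x^\beta))$ via incomplete-gamma asymptotics; your version is a welcome filling-in of that step.
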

\proof
The risk is finite 
for the memoryless rule with $f(t)=(1-t)^{-\alpha}$ for any $\alpha>(1-\beta)^{-1}$.
To see this, use the bound (\ref{m-rule-bound}) and formulas
$$Q(x,1)=O(\exp(x^\beta))~~~ (x\to\infty), ~~~~\exp(-F(t))=\exp\left(-{1\over (\alpha-1)(1-t)^{\alpha-1}}\right),$$
which also imply that for this rule $\prob(\tau=1)=0$.
Now $\ex[\exp((X_\tau)^\beta))]$ is estimated from
asymptotics of the incomplete gamma function.  
\endpf

However, the risk is infinite for any stopping rule if $q$ grows too fast. 
The following result is an analogue of \cite[Proposition 5.3]{GianSam} 
for rank rules.

\begin{proposition}
If $Q(b,1)=\infty$ for some $b\in {\mathbb R}_+$ then $V({\cal F})=\infty$, i.e. there is no stopping rule $\tau\in{\cal F}$ with finite risk.
\end{proposition}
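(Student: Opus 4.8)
The plan is: given an arbitrary $\tau\in{\cal F}$, exhibit a positive-probability event on which $\tau$ is forced to select an atom whose south-west corner is so heavily populated that the conditional expected loss is already $+\infty$.

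\emph{Preliminary reductions.} Since $q$ is nondecreasing and $Q(b,1)=\infty$ forces $q(\infty)=\infty$, the terminal term in (\ref{loss}) gives $\ex[q(R_\tau)]=\infty$ whenever $\prob(\tau=1)>0$; so we may assume $\prob(\tau=1)=0$, i.e.\ $\tau$ a.s.\ selects a genuine atom $(\tau,X_\tau)\in{\cal P}$. Given the history $\sigma(\Xv_s,\ s\le\tau)$ the number of atoms south-east of $(\tau,X_\tau)$ is Poisson$(\btau X_\tau)$ and independent of that history, while $I_\tau$ is history-measurable, so $\ex[q(R_\tau)\giv\sigma(\Xv_s,s\le\tau)]=Q(\btau X_\tau,I_\tau)$. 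By the derivative identity recorded above all the series $Q(\cdot,i)$ have one and the same radius of convergence, and an elementary term-by-term domination ($\xi^{i-1}e^\xi Q(\xi,i)$ eventually dominates $e^\xi Q(\xi,1)$ coefficient-wise) yields $Q(\xi,i)=\infty$ for every $\xi\ge b$ and every $i\in\Nat$. Hence finiteness of $\ex[q(R_\tau)]$ would force $\btau X_\tau<b$ a.s., i.e.\ $\tau$ stops strictly below the hyperbola $f_b(t)=b/(1-t)$.

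\emph{The trap.} Put $S_b(x)=x-b-b\log(x/b)$; it increases from $0$ at $x=b$ to $\infty$, so there is a unique $x^\ast>b$ with $S_b(x^\ast)=b$. Consider the disjoint sets $A=\{(t,x):0<x<\min(x^\ast,\,b/(1-t))\}$ (sitting below $f_b$) and $B=\{(t,x):0<t<1-b/x^\ast,\ b/(1-t)<x<x^\ast\}$ (sitting above $f_b$); routine integration gives ${\rm area}(A)=x^\ast-b<\infty$ and ${\rm area}(B)=S_b(x^\ast)=b$. Let $E=\{{\cal P}\cap A=\varnothing\}$, so $\prob(E)=e^{-(x^\ast-b)}>0$. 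On $E$ the selected atom lies outside $A$, which together with $\btau X_\tau<b$ forces $X_\tau\ge x^\ast$ and then $\tau>1-b/x^\ast$ (were $X_\tau<x^\ast$ we would get $X_\tau\ge b/(1-\tau)$, contradicting $\btau X_\tau<b$; and $x^\ast\le X_\tau<b/(1-\tau)$ gives the bound on $\tau$). Consequently every $(s,y)\in{\cal P}\cap B$ satisfies $s<1-b/x^\ast<\tau$ and $y<x^\ast\le X_\tau$, hence lies south-west of $(\tau,X_\tau)$ and is counted in its initial rank; so on $E$ we have $R_\tau\ge I_\tau\ge 1+\#({\cal P}\cap B)$. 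Since $B$ and $A$ are disjoint, the Poisson$(b)$ variable $\#({\cal P}\cap B)$ is independent of $E$, and therefore
$$\ex[q(R_\tau)]\ \ge\ \ex\!\big[q(1+\#({\cal P}\cap B))\,\1_E\big]=\prob(E)\cdot\ex[q(1+\mbox{Poisson}(b))]=\prob(E)\cdot Q(b,1)=\infty,$$
where the last equality uses $\ex[q(1+\mbox{Poisson}(b))]=\sum_{r\ge1}q(r)\,e^{-b}b^{r-1}/(r-1)!=Q(b,1)$.

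The only genuinely subtle choices are: taking $B$ to be a \emph{fixed} region (independent of the realisation of $\tau$) that nonetheless lies entirely south-west of the stopped atom throughout $E$ — this is exactly what keeps $\#({\cal P}\cap B)$ independent of $E$ — and checking that $Q(\cdot,i)$ is infinite beyond $b$ uniformly in $i$; everything else is elementary integration and reindexing. The step I would verify most carefully is the geometric implication, on $E$, that the stopped atom must satisfy $X_\tau\ge x^\ast$ and $\tau>1-b/x^\ast$, since this is where the a.s.\ bound $\btau X_\tau<b$ and the shape of $A$ interlock.
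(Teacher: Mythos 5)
Your proof is correct and follows essentially the same route as the paper's: first rule out stopping on or above the hyperbola $f_b$ because the adapted loss $Q(\bar{\tau}X_\tau,I_\tau)$ is already infinite there, then trap the rule on the positive-probability event that the region below $\min(x^*,f_b)$ contains no atoms, so that the fixed region $B$ between the hyperbola and the level $x^*$ lies entirely south-west of the stopped atom and contributes an independent Poisson$(b)$ count to the rank, whence averaging gives infinite risk. The only cosmetic differences are that the paper picks $x$ with $S(x)>b$ where you normalise $S(x^*)=b$ exactly, and your coefficient-wise check that $Q(\xi,i)=\infty$ for all $i$ and $\xi\ge b$ spells out a step the paper leaves implicit.
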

\proof
Choose any $x$ with $S(x)=x-b-b\log(x/b)>b$.
The conditional loss by stopping above $f_b$ is infinite, thus we can only consider stopping rules $\tau$
which never do that and satisfy $\prob(\tau=1)=0$.
On the other hand, on the nonzero event 
$\{{\cal P}\cap \{(t,y): y<\min(x,f(t))\}=\varnothing\}$  stopping occurs at some atom $(s,z)$ with $s>1-b/x,\,z>x$,
and averaging we see that the expected loss is infinite.
\endpf

\vskip0.5cm\noindent
{\bf Remark}
By \cite[Section 5]{GianSam}, $V({\cal R})=\infty$  if $\sum_r (\log q(r))/r^2=\infty$.
For instance,  the loss structure $q(r)=e^r$
implies that the risk of rank rules is infinite.
It is not known if the risk of rank rules is finite for $q(r)=\exp(x^\beta)$ with $0<\beta<1$.
\vskip0.5cm
\par For the sequel we  assume that the loss function satisfies
\eq\label{Ass}
\limsup{ q(r+1)\over q(r)}=C,
\en
with some constant $C>0$.
The assumption implies that
 $Q(x,i)<\infty$ for all finite $x,i$.
Another consequence is that $\ex[q(R_\tau)]<\infty$ implies $\ex[q(R_\tau+N)]<\infty$ for $N$ either a fixed positive integer 
or a Poisson random variable, independent of $\tau$.

\begin{lemma}\label{cont1} If $\ex[q(R_\tau)\giv \Xv_0=\xv]<\infty$ then $\ex[q(R_\tau)\giv \Xv_0=\xv']$ is finite and continuous
in $x$, where $\xv'$ is either $\xv\cup x$ or  $(x_1+x,x_2+x,\ldots)$.
\end{lemma}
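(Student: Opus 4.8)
The plan is to couple the two configurations on one probability space. Fix a single copy of the driving Poisson scatter of ``new'' atoms on $[0,1]\times\overline{\mathbb R}_+$, and build the state process started from $\xv$ by superposing these atoms, as they arrive, onto the pre-existing atoms listed in $\xv$, and the state process started from $\xv'$ by superposing the \emph{same} new atoms onto the atoms listed in $\xv'$. Since $\xv'$ is obtained from $\xv$ either by inserting one atom of height $x$ (the case $\xv'=\xv\cup x$) or by raising every entry by $x\ge0$, the two state processes differ at every time only in their pre-existing part, while the new atoms --- the only atoms a rule may select --- occupy identical positions in both. Hence the final rank in the $\xv'$-world of any given new atom exceeds its final rank in the $\xv$-world by at most $1$ in the insertion case (one extra atom may lie below the selected value), and is not larger than it in the shift case (raising the pre-existing atoms can only move them from below to above the selected value).

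For finiteness I would produce an admissible rule in the $\xv'$-world whose risk I can bound. Let $\tau$ achieve a finite $\xv$-risk; define $\bar\tau$ to be the rule that, on the coupled paths, reconstructs the $\xv$-world path --- discard the inserted atom, resp.\ pretend the pre-existing atoms sit at $\xv$ --- and then acts exactly as $\tau$ does there. Then $\bar\tau$ is a measurable functional of the observed $\xv'$-path, using no extra information, and it selects the same new atom that $\tau$ selects from $\xv$; by the previous paragraph the final rank of that atom in the $\xv'$-world is at most $R_\tau+1$, resp.\ at most $R_\tau$. Thus the $\xv'$-risk is bounded by $\ex[\,q(R_\tau+1)\giv\Xv_0=\xv\,]$, resp.\ $\ex[\,q(R_\tau)\giv\Xv_0=\xv\,]$, and both are finite, the first by the consequence of (\ref{Ass}) recorded above. (When $q$ is unbounded a finite $\xv$-risk already forces $\prob(\tau=1\giv\Xv_0=\xv)=0$, so no terminal term appears.)

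For continuity in $x$ I would first observe that, along the coupling, $x\mapsto\ex[\,q(R_\tau)\giv\Xv_0=\xv'\,]$ is monotone: lowering $x$ pushes $\xv'$ down in the order $\prec$, which only raises ranks; so it is enough to exclude jumps, and the endpoint $x=\infty$ (where $\xv\cup x\to\xv$) is covered by the same argument. Fix $x_0$, take a rule whose risk from $\xv'$ at the parameter value $x_0$ is within $\varepsilon$ of optimal, and transport it to a neighbouring level $x$ by the same reconstruction device: the selected new atom is unchanged, and its final rank changes only through pre-existing atoms whose heights fall strictly between the two perturbed configurations --- a single atom in the insertion case, those lying in a value-window of width $|x-x_0|$ about the selected value $Y_\tau$ in the shift case. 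Letting $\varepsilon\downarrow0$, the claim reduces to showing that this change of $\ex[q(R_\tau)]$ tends to $0$ as $x\to x_0$, uniformly over the transported rules.

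That uniform estimate is the main obstacle, and it is where (\ref{Ass}) and the finiteness hypothesis genuinely enter. In the insertion case it is clean: on the event that the extra atom changes the rank, the selected value lies in the shrinking window, so the rank is dominated by a \emph{rule-independent} variable of the form (Poisson count of new atoms below a fixed height) $+$ (a fixed finite count), whose $q$-image is integrable by (\ref{Ass}); the probability of that event vanishes since a.s.\ no new atom sits at the boundary height, and dominated convergence closes it. In the shift case one must moreover prevent the increment $|\{k:\ x_k+x<Y_\tau\le x_k+x_0\}|$ from conspiring with a fast-growing $q$; the point is that a finite $\xv$-risk already forces $\xv$ to be sparse relative to the growth of $q$ --- because $R_\tau\ge|\{k:x_k<Y_\tau\}|$ links $Y_\tau$, hence the local density of $\xv$ around it, to $R_\tau$ --- so that for $|x-x_0|$ small the increment is at most $1$ off an event of small probability, and the same domination applies. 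I expect this uniform bound to be the one genuinely technical piece.
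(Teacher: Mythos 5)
Your coupling is exactly the paper's (very terse) argument: fix one realization of the Poisson scatter, transport $\tau$ between the two initial configurations, observe that $R_\tau$ can differ only when an atom lands between the two configurations, and control the loss increment via (\ref{Ass}). The finiteness step is right and is the intended use of the remark following (\ref{Ass}) that $\ex[q(R_\tau)]<\infty$ implies $\ex[q(R_\tau+1)]<\infty$. The one thing to repair is your continuity step: Lemma \ref{cont1} is a statement about a \emph{fixed} rule $\tau$, not about the value function, so the $\varepsilon$-optimal rules and the ``uniform over transported rules'' bound you flag as the main technical obstacle do not belong here --- that machinery is what Lemma \ref{cont2} later builds \emph{on top of} this lemma. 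For a fixed $\tau$ the argument closes at once by dominated convergence: in the insertion case $q(R_\tau^{\xv\cup x})\le q(R_\tau^{\xv}+1)$, which is integrable by (\ref{Ass}) and the hypothesis, and the event on which the rank changes, $\{X_\tau$ between $x$ and $x_0\}$, has probability tending to $0$ since $X_\tau$ puts no mass on any fixed level; in the shift case (with $x\ge 0$) the rank is monotone in $x$ and dominated by $q(R_\tau^{\xv})$ itself, and the increment $\#\{k:\,Y_\tau-x_0\le x_k<Y_\tau-x\}$ vanishes a.s.\ as $x\to x_0$ because a.s.\ no $x_k$ equals $Y_\tau-x_0$. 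In particular your worry about $\xv$ being ``sparse relative to the growth of $q$'' is unnecessary: you never need to control the size of the increment on the exceptional event, only to dominate $q(R_\tau)$ there by an integrable variable, which the monotone comparison already provides. With that simplification your proof is complete and coincides with the paper's.
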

\proof As $x$ changes to some $x'$, the outcome $R_\tau$ can only change if there 
is an atom between $x$ and $x'$, which occurs with probability about $|x-x'|$ when $x,x'$ are close.
Conditionally on this event, the change of expected loss is bounded in consequence of (\ref{Ass}).
\endpf

\vskip0.5cm

\noindent
{\bf 3. Properties of the optimal rule} The
optimal stopping problem in $\cal F$ is a problem of Markovian type, associated with the time-homogeneous Markov process
$((\Xv_t,I_t),~t\in [0,1])$, with state-space ${\cal X}\times \overline{\Nat}$
and time-dependent loss $Q(\bar{t}X_t,I_t)$ for stopping at time $t$.
If $I_t$ assumes some finite value $i$ then $t\in\{T_r\}$ and $X_{t,i}=X_t$, 
which combined with the fact that ranking of the arrivals after $t$ depends on 
${\cal P}\cap([0,t]\times{\mathbb R}_+)$ through $\Xv_t$ shows that
$(\Xv_t,I_t)$ indeed summarises all relevant information
up to time $t$. 
We choose $(\Xv_t,I_t)$ in favour of (probabilistically equivalent)  data $(\Xv_{t-},X_t)$ 
since $x_i$ is well-defined as a function of $(\xv,i)$ even if $\xv$ has repetitions.

\par Following a well-known recipe, we  consider a family of conditional stopping problems parametrised by 
 $(t,\xv)$. This corresponds 
to the class of stopping rules $\tau>t, \,\tau\in{\cal F}$  that operate under the condition  $\Xv_t=\xv$.
The effect of the conditioning is that each $x_r<X_\tau$ contributes one unit to $R_\tau$ in the event $\tau<1\,$. 
The variable $t$ can be eliminated by a change of variables
which exploits  the
{\it self-similarity} of $\cal P$ (a  property which has no analogue in the finite-$n$ setting):
for $t\in \,]0,1[$ fixed,
the affine  mapping  $(s,x)\mapsto ((s-t)/\tbar,x\tbar)$ preserves both the coordinate-wise 
order and the Lebesgue measure, hence
transforms the point process ${\cal P}\cap ([t,1]\times {\mathbb R}_+)$ into a distributional copy of $\cal P$
with the same ordering of the atoms.
Thus we come to the following conclusion:
\begin{lemma}\label{selfsim} The stopping problem from time $t$ on with history 
$\xv$ is equivalent to  the stopping problem starting with
$\Xv_0=\tbar\xv$ at time $0$.
\end{lemma}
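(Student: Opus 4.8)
The plan is to realize the equivalence as an explicit pathwise coupling, via the affine map already identified in the text. Fix $t\in\,]0,1[$ and $\xv\in{\cal X}$. The conditional stopping problem from time $t$ on, given $\Xv_t=\xv$, is governed by two independent ingredients: the history $\xv$ (which contributes one unit to the final rank $R_\sigma$ for each $x_r<X_\sigma$ in the event $\sigma<1$), and the restricted point process ${\cal P}\cap(\,]t,1]\times{\mathbb R}_+)$, which determines arrival times, values and the ``fresh'' part of the ranks after $t$. First I would apply the map $\phi_t:(s,x)\mapsto((s-t)/\tbar,\,x\tbar)$ to this restricted process. Since $\phi_t$ has Jacobian $1$ (it stretches time by $1/\tbar$ and compresses the value axis by $\tbar$) and is a bijection of $\,]t,1]\times{\mathbb R}_+$ onto $[0,1]\times{\mathbb R}_+$, the image $\phi_t({\cal P}\cap(\,]t,1]\times{\mathbb R}_+))$ is again a homogeneous unit-rate Poisson process on the strip, i.e.\ a distributional copy of ${\cal P}$; moreover $\phi_t$ preserves the coordinate-wise order on atoms, so labelling by increasing value is respected and initial/final ranks of the transported atoms are unchanged. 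This is Lemma~\ref{selfsim}'s self-similarity statement made concrete.

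Next I would transport the history: under $\phi_t$ the value axis is scaled by $\tbar$, so the sequence $\xv$ of ``already-present'' points at time $t$ becomes $\tbar\xv$, which is precisely the initial state $\Xv_0=\tbar\xv$ of a fresh copy of the model started at time $0$. I would then check that the correspondence of stopping rules is a bijection: a rule $\sigma\in{\cal F}$ with $\sigma>t$ operating under $\Xv_t=\xv$, being measurable with respect to $(\sigma(\Xv_s,\,t\le s\le r),\,r\le1)$, corresponds under $\phi_t$ to a rule $\tau\in{\cal F}$ measurable with respect to $(\sigma(\Xv_u,\,u\le r),\,r\le1)$ in the new model, with $\tau=(\sigma-t)/\tbar$, and $\{\sigma=1\}$ maps to $\{\tau=1\}$; the inverse assignment $\sigma = t+\tbar\tau$ works the same way. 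Under this bijection the selected atom $(\sigma,X_\sigma)$ maps to $(\tau,X_\sigma\tbar)$, and $R_\sigma$ — the rank of $X_\sigma$ among the value $X_\sigma$ itself, the transported fresh atoms, and the entries of $\xv$ below $X_\sigma$ — equals exactly the rank of $X_\sigma\tbar$ in the new model with $\Xv_0=\tbar\xv$, because scaling by $\tbar$ does not change which points lie below which. Hence $R_\sigma$ and the new $R_\tau$ have the same (conditional) law, so $\ex[q(R_\sigma)\giv\Xv_t=\xv,\,\sigma>t]=\ex[q(R_\tau)\giv\Xv_0=\tbar\xv]$, and taking infima over the two matched classes gives the asserted equivalence of the stopping values.

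The one genuinely delicate point — the rest being bookkeeping — is the measurability matching of the two filtrations under $\phi_t$, and in particular making sure the boundary value $1$ (the ``never stop'' outcome, cf.\ $\inf\varnothing=1$) is carried across correctly: a rule that reaches time $1$ without stopping in the old problem must map to one that reaches time $1$ in the new problem and incurs the terminal loss $q(\infty)$, which is consistent because $R_\sigma=\infty$ there. Since $\phi_t$ is a homeomorphism of the strips and induces a measurable bijection between the path spaces of $(\Xv_s)_{s\ge t}$ and $(\Xv_u)_{u\ge0}$ that is adapted in both directions, the canonical-stopping-time condition of the text (Definition via $\{\tau\le r\}\in\sigma(\Xv_s,\,s\le r)$) is preserved, so no stopping rule is gained or lost in the translation. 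With these checks in place the lemma follows.
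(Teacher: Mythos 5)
Your proposal is correct and follows exactly the paper's route: the paper establishes the lemma by the same observation that the affine map $(s,x)\mapsto((s-t)/\tbar,\,x\tbar)$ preserves Lebesgue measure and the coordinate-wise order, hence carries ${\cal P}\cap([t,1]\times{\mathbb R}_+)$ to a distributional copy of ${\cal P}$ and the history $\xv$ to the initial state $\tbar\xv$. You merely spell out the bookkeeping (rank preservation, the bijection of stopping rules, the terminal value) that the paper leaves implicit.
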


\par Let $v(\xv)$ be the minimum risk given $\Xv_0=\xv$. 
The  function $v$, defined on the whole of $\cal X$, satisfies a lower bound
\eq\label{hp}
v(\xv)\geq\sum_{r=1}^\infty q(r)(e^{-x_{r-1}}-e^{-x_r})~~~~~~~~(x_0=0),
\en
which is strict if the series converges
(the bound is a continuous-time  analogue of the  finite-$n$ `half-prophet' bounds in \cite[Lemma 3.2]{Bruss2}).
The bound  follows by observing that $X_\tau$ cannot exceed the smallest value arrived on $[0,1]$.

\par If $V({\cal F})=\infty$ then, of course, $v(\xv)=\infty$  everywhere,
but for arbitrary unbounded $q$ 
there exists a dense in $\cal X$ set of sequences $\xv=(x_r)$  for which $x_r\uparrow\infty$ so slowly that  $v(\xv)=\infty$.
Thus if $q(\infty)=\infty$, the function $v$ is discontinuous at every point where it is finite. 
If $q$ is truncated at $m$, then clearly $v$ depends only on the first $m-1$ components of $\xv$ and satisfies $v(\xv)<q(m)$.
Let ${\bf 0}=(0,0,\ldots)$.

\begin{lemma}\label{cont2} The following hold:
\begin{itemize}
\item[{\rm (i)}] $v(\xv)<\infty$ implies that $v(\xv\cup x)$  is finite and continuous in $x$,
\item[{\rm (ii)}] if  $q(\infty)<\infty$ then $v$ is continuous, and  satisfies $v(\xv)<q(\infty)$ for $x_1>0$.
\item[{\rm (iii)}]   $v(\xv)\to q(\infty)$ as $\xv\to{\bf 0}$.

\end{itemize}
\end{lemma}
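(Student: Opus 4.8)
The plan is to derive all three claims from three structural facts. (a) \emph{$v$ is order-reversing}: if $x_r\le y_r$ for every $r$ then $v(\xv)\ge v(\yv)$, since a rule designed for the $\yv$-configuration can be run on the $\xv$-configuration after relabelling each phantom value $y_r$ to $x_r$ --- a transformation of $(\Xv_t,I_t)$ that is causal, because in $\cal F$ one can recognise which atoms were present at time $0$, hence admissible --- and the two rules then stop at the same atom with the final rank not larger in the $\xv$-configuration. In particular $x\mapsto v(\xv\cup x)$ is nonincreasing and $v$ is coordinatewise nonincreasing. (b) \emph{$v$ is upper semicontinuous}: $v(\xv\cup\,\cdot\,)$ is an infimum of the functions $x\mapsto\ex[q(R_\tau)\giv\Xv_0=\xv\cup x]$, each finite and continuous (near a point where it is finite) by the argument of Lemma~\ref{cont1}; when $q$ is bounded the same runs coordinate by coordinate and, with dominated convergence, in the product topology. (Also $v\le q(\infty)$ always, via $\tau\equiv1$.) (c) A \emph{transplantation estimate}: transplanting a near-optimal rule from one configuration onto a nearby one changes the final rank only by the number of phantom values caught strictly between the two configurations, and by (\ref{Ass}) the induced change of expected loss is $q$ evaluated slightly higher on the event that the stopped value falls in that ``band'', so it is controlled once that event is seen to have small probability.

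\smallskip\noindent
With these, part (i) goes as follows. Finiteness is immediate from Lemma~\ref{cont1}. Writing $w(x)=v(\xv\cup x)$, facts (a)--(b) give left-continuity; for right-continuity at $x_0$ fix $x>x_0$, let $\tau^*$ be within $\delta$ of $v(\xv\cup x)$, with stopped value $Y^*$, and transplant it to $\xv\cup x_0$, where the final rank is raised by exactly $\1(Y^*\in\,]x_0,x])$, giving
\eq\label{rc}
0\le w(x_0)-w(x)\le \delta+\ex\Big[\big(q(R_{\tau^*}+1)-q(R_{\tau^*})\big)\,\1\big(Y^*\in\,]x_0,x]\big)\Big].
\en
On that event $R_{\tau^*}\le 1+\#\{r:x_r<x\}+\#\big({\cal P}\cap([0,1]\times[0,x])\big)$, which has all exponential moments by (\ref{Ass}) when the middle term is finite, and $\prob(Y^*\in\,]x_0,x])\le 1-e^{-(x-x_0)}\le x-x_0$ since a stopped value in $]x_0,x]$ requires an atom of $\cal P$ there; hence the right side of (\ref{rc}) tends to $\delta$ as $x\downarrow x_0$ with a bound free of $\tau^*$ (and if $\#\{r:x_r<x\}=\infty$, a near-optimal rule cannot stop above that accumulation level, so that probability is simply $0$), and letting $\delta\downarrow0$ ends (i). Part (iii): fix $k$ and $\eta>0$; once $\xv$ is close enough to ${\bf 0}$ one has $x_1,\dots,x_k<\eta$, so on $\{X_\tau>\eta\}\cup\{\tau=1\}$ the loss is at least $q(k+1)$, and this event has probability at least $e^{-\eta}$ because a value $\le\eta$ requires an atom of $\cal P$ in $[0,1]\times[0,\eta]$; as $q\ge0$ this gives $\ex[q(R_\tau)\giv\Xv_0=\xv]\ge q(k+1)e^{-\eta}$ for all $\tau$, and $\inf$ over $\tau$, then $\eta\downarrow0$, then $k\to\infty$, against $v\le q(\infty)$, yields $v(\xv)\to q(\infty)$.

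\smallskip\noindent
For part (ii): if $x_1>0$ take $c\in\,]0,x_1[$ and the rule ``stop at the first atom of value $\le c$'', whose outcome the phantoms never affect, so its risk is $L(f)$ for $f\equiv c$, which by (\ref{m-rule-risk}) is strictly below $q(\infty)$ because $Q(\cdot,1)<q(\infty)$ at finite arguments (here $q(1)<q(\infty)$ enters); thus $v(\xv)<q(\infty)$. For continuity when $q(\infty)<\infty$, $v$ is u.s.c.\ by (b); given $\xv^{(n)}\to\xv$, put $\underline x^{(n)}_r=\inf_{m\ge n}x^{(m)}_r$ and $\overline x^{(n)}_r=\sup_{m\ge n}x^{(m)}_r$, so that $\underline\xv^{(n)}\uparrow\xv$, $\overline\xv^{(n)}\downarrow\xv$ and, by (a), $v(\overline\xv^{(n)})\le v(\xv^{(n)})\le v(\underline\xv^{(n)})$, which reduces the claim to monotone sequences: the increasing one is squeezed by (a) and u.s.c., and the decreasing one is handled by the estimate (\ref{rc}) with rank increment $\#\{r:x_r<Y^*\le\overline x^{(n)}_r\}$, nonzero only if $Y^*$ lands in $\bigcup_r\,]x_r,\overline x^{(n)}_r]$, an event whose probability is at most $1-\exp\!\big(-\big|\bigcup_r\,]x_r,\overline x^{(n)}_r]\big|\big)\to0$ (configurations having a finite accumulation point of the $x_r$ isolated as in (i)). I expect the main obstacle to be exactly this downward direction --- bounding the loss inflation caused by added phantom mass \emph{uniformly over near-optimal rules} --- which forces both the use of (\ref{Ass}) and the separate, easier treatment of configurations whose phantom values accumulate at a finite level.
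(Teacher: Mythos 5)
Your parts (i) and (iii) are sound and essentially the paper's own route: (i) is the transplantation of a near-optimal rule between $\xv\cup x$ and $\xv\cup x'$ with the change of loss controlled via Lemma~\ref{cont1} and assumption (\ref{Ass}) (your quantitative version, with the Poisson bound $1-e^{-(x-x_0)}$ on the probability that the stopped atom falls in the band, is a correct elaboration of what the paper leaves implicit), and (iii) is exactly the half-prophet bound (\ref{hp}) in probabilistic disguise. The claim $v(\xv)<q(\infty)$ for $x_1>0$ via a constant threshold $c<x_1$ is also fine.

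The genuine gap is in the downward half of the continuity argument in (ii). You bound the probability that the stopped value lands in the perturbation band by $1-\exp\bigl(-\bigl|\bigcup_r\,]x_r,\overline x^{(n)}_r]\bigr|\bigr)$ and assert this tends to $0$. But product-topology convergence $\xv^{(n)}\to\xv$ only forces each individual length $\overline x^{(n)}_r-x_r$ to vanish; the \emph{total} Lebesgue measure of the union need not tend to $0$, and can even be identically infinite. For instance, with $x_r=r$ and $x^{(m)}_r=r+\1(r>m)$ one has $\xv^{(m)}\to\xv$ coordinatewise while $\bigcup_r\,]x_r,\overline x^{(n)}_r]\supseteq\,]n+1,\infty[$ for every $n$, so your bound equals $1$ throughout. (Your parenthetical about finite accumulation points addresses a different degeneracy and does not touch this one.) To close the gap you would have to show that near-optimal rules stop at high levels with uniformly small probability, which is extra work; the paper avoids the issue entirely by a different reduction: for $q$ truncated at $m$ the function $v$ depends on only the first $m-1$ coordinates, so continuity in the product topology follows from (i) applied finitely many times, and the general bounded case follows because $0\le v-v_m\le q(\infty)-q(m)\to 0$ uniformly, a uniform limit of continuous functions being continuous. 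I recommend replacing your envelope argument for (ii) by this truncation step; the rest of your write-up can stand.
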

\proof Let $\tau$ be  $\epsilon$-optimal under the initial configuration $\xv\cup x$.
Applying $\tau$ under $\xv\cup x'$, Lemma \ref{cont1} implies that 
$v(\xv\cup x')\leq  v(\xv\cup x)+\epsilon$. Changing the roles of $x,x'$ and letting $\epsilon\to 0$ yield (i).
The continuity of $v$ follows directly from (i) if $q$ is truncated at some $m$. The general bounded case
follows by approximation as $m\to\infty$. 
Assertion (iii) can be derived from (\ref{hp}).
\endpf

\begin{lemma}\label{L2}
If $q$ is not truncated then
\begin{itemize}
\item[{\rm (i)}] $Q(x,i)$ is strictly increasing in both $x$ and $i$,
\item[{\rm (ii)}]  $\xv\prec \yv$ implies $v(\xv)< v(\yv)$ provided these are finite,
\end{itemize}
If $q$ is truncated at $m$ and $q(m-1)<q(m)$ then {\rm (i)} is valid only for $i\in [m]$,  $Q(\xv,i)=q(m)=q(\infty)$ for $i\geq m$,
and a counterpart of {\rm (ii)} holds for the order defined on the first $m-1$ components, 
with $v(\xv)<q(m)$ for all $\xv\in {\cal X}$ with $x_{m-1}>0$.
\end{lemma}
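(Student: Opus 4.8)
The plan is to prove (i) first and then feed it into (ii). For (i) I would write $Q(\xi,i)=\ex\,[q(i+N_\xi)]$, with $N_\xi$ a Poisson variable of mean $\xi$ (the series converges by (\ref{Ass})). Monotonicity in $i$ is immediate from $q$ nondecreasing, and strictness uses that $q$ is not truncated: for every $i$ there is $j\ge i$ with $q(j+1)>q(j)$, so, as $\prob(N_\xi=j-i)>0$ for $\xi>0$, the increment $Q(\xi,i+1)-Q(\xi,i)=\ex\,[q(i+1+N_\xi)-q(i+N_\xi)]$ is strictly positive. Differentiating the series and using the identity $\dd[\e^\xi Q(\xi,1)]/\dd\xi^{i-1}=\e^\xi Q(\xi,i)$ recalled above gives $\partial_\xi Q(\xi,i)=Q(\xi,i+1)-Q(\xi,i)$, which is $>0$ for $\xi>0$; together with continuity at the origin this makes $Q(\cdot,i)$ strictly increasing on $[0,\infty)$. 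If $q$ is truncated at $m$ with $q(m-1)<q(m)$, then $q(i+N)\equiv q(m)$ for $i\ge m$, hence $Q(\cdot,i)\equiv q(m)=q(\infty)$ there, while for $i\le m-1$ the jump sits at $j=m-1\ge i$ and the same two computations give $Q(\xi,1)<\dots<Q(\xi,m)$ and $\partial_\xi Q(\xi,i)>0$.

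For (ii) I would use a monotone coupling: realise both conditional problems on one copy of ${\cal P}$, so that starting from $\Xv_0=\xv$ the state at time $t$ is $\xv$ together with the atoms of ${\cal P}$ arrived by $t$, and likewise from $\yv$. Since $\xv\preceq\yv$ coordinatewise and inserting the same atoms into both sequences preserves this, the two state processes stay $\preceq$-comparable for all $t$. A stopping rule for the $\xv$-problem is a functional of $\xv$ and of the arrived atoms, hence also a rule for the $\yv$-problem, which selects the \emph{same} atom $(t,z)$; for that atom the conditional loss is $Q(\tbar z,\,1+a_{\xv}+c)$ in one world and $Q(\tbar z,\,1+a_{\yv}+c)$ in the other, where $c$ is the number of atoms already below $z$ and $a_{\xv}=\#\{r:x_r<z\}\ge\#\{r:y_r<z\}=a_{\yv}$, so by part (i) the loss in the world with the $\prec$-smaller initial state is pointwise the larger one. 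Taking an $\varepsilon$-optimal rule and letting $\varepsilon\downarrow0$ yields the monotonicity of $v$ along $\prec$ in the non-strict sense. Finally a general comparison $\xv\prec\yv$ reduces, by a chain of single-coordinate modifications that keeps the sequence nondecreasing together with the continuity of $v$ from Lemmas \ref{cont1} and \ref{cont2}, to the case where $\xv$ and $\yv$ differ in exactly one entry $x_{r_0}<y_{r_0}$.

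The strict inequality is the step I expect to be the main obstacle. In the one-entry case the loss gap between the two worlds equals $Q(\tbar z,i+1)-Q(\tbar z,i)$ on the event that the selected atom has value in the band $(x_{r_0},y_{r_0}]$ and $0$ otherwise, and by part (i) this gap is strictly positive as soon as $\tbar z>0$; so what remains is to show that a near-optimal rule for the $\prec$-smaller configuration selects an atom in this band with probability bounded away from $0$ uniformly in $\varepsilon$, and that the gap there is bounded below by a fixed positive constant. For the second point I would use that for unbounded $q$ a rule of finite risk keeps $R_\tau$ bounded in probability (by Markov's inequality, since $q(r)\to\infty$), so only finitely many rank-increments of $Q$ are relevant and their minimum over a compact range of $\tbar z$ is positive. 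For the first I would single out a positive-probability event on which any near-optimal rule must select an atom that the two initial states rank differently — for instance one forcing the chosen atom into $(x_{r_0},y_{r_0}]$, built from the smallest atom of ${\cal P}$ falling into that band and arriving before time $\tfrac12$ — and control the passage to the limit along a minimizing sequence of rules with the self-similarity of Lemma \ref{selfsim}, the continuity of Lemmas \ref{cont1} and \ref{cont2}, and the strict half-prophet bound (\ref{hp}). Chaining the single strict step along the reduction then separates $v(\xv)$ from $v(\yv)$, the non-strict comparison fixing the direction.

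The truncated case is handled by the same argument, restricted to the first $m-1$ coordinates, with $Q(\cdot,i)$ now strictly increasing in $i$ only for $i\le m$. The bound $v(\xv)<q(m)$ for $x_{m-1}>0$ follows as in Lemma \ref{cont2}(ii): exhibit a rule which, on an event of positive probability, stops below all of $x_1,\dots,x_{m-1}$ and below every later atom — hence at final rank at most $m-1$ and loss $q(m-1)<q(m)$ — while the loss is at most $q(m)$ in every case.
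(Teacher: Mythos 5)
Your core arguments coincide with the paper's. For (i) the paper simply invokes the series (\ref{Qser}) and the monotonicity of $q$; your Poisson-mixture reading $Q(\xi,i)=\ex[q(i+N_\xi)]$ together with the identity $\partial_\xi Q(\xi,i)=Q(\xi,i+1)-Q(\xi,i)$ is the same idea carried out in more detail, and your treatment of the truncated case matches the statement. For (ii) the paper uses exactly your coupling: run the same rule on the same realisation of $\cal P$ under both initial configurations and compare final ranks via $\#\{r:x_r<z\}\geq\#\{r:y_r<z\}$. Two remarks. First, note that this coupling (yours and the paper's alike) yields $v(\xv)\geq v(\yv)$ for $\xv\prec\yv$, i.e.\ the $\prec$-smaller configuration has the \emph{larger} risk --- consistent with the later assertion that $v(\xv\cup x)$ is nonincreasing in $x$, but opposite in sign to the inequality as printed in the lemma; you handled the direction correctly, so this is a discrepancy in the statement, not in your argument. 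Second, the paper's proof stops at the weak inequality and never addresses strictness in (ii); you are right to flag strictness as the genuinely delicate point, and your sketch (reduction to a single-coordinate gap, plus a positive-probability event on which a near-optimal rule stops in the band $(x_{r_0},y_{r_0}]$, where part (i) gives a uniformly positive loss increment) is the natural route, though as you acknowledge the uniform lower bound on that probability along a minimizing sequence of rules is not actually established --- one cleaner way is to invoke the existence of an optimal rule $\tau^*$ with threshold $H_t\uparrow\infty$ (Proposition \ref{main}), which stops in the band with positive probability, at the price of a forward reference. Since the later applications of (ii) in the paper use only the non-strict monotonicity, this gap is shared with the original and does not affect the rest of the development.
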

\begin{proof} Assertion (i) follows from (\ref{Qser}) and the monotonicity of $q$.
For (ii), observe that $\xv\prec \yv$ implies $\#\{i: x_i< x\}\geq \#\{i: y_i< x\}$ for all $x>0$.
Hence for every rule $\tau$ the stopped final rank under  $\Xv_0=\xv$ cannot increase when the condition is replaced  
by $\Xv_0=\yv$.

\end{proof}

Let $i(\xv,x):=\#\{r:x_r\leq  x\}$ and 
suppose $\xv$ satisfies $0<x_1\leq x_2\leq \ldots \leq \infty$.
  Applying Lemma \ref{L2}, we see that if $q$ is not truncated then the 
function $Q(x, i(\xv,x))$ is strictly increasing in $x$ from $q(1)$ to $q(\infty)$.
If $q$ is truncated at $m$ and $q(m-1)<q(m)$ then $Q(x, i(\xv,x))$ is strictly increasing as $x$ varies from $0$ to $x_{m-1}$, with
$Q(x, i(\xv,x))=q(m)$ for $x\geq x_{m-1}$.
On the other hand, $(\xv\cup x)\prec (\xv\cup y)$ for $x<y$, hence $v(\xv\cup x)$ is nonincreasing in $x$. Thus introducing
$$h(\xv):=\sup\{x: Q(x, i(\xv,x))<v(\xv\cup x)\}$$
 we have $Q(x, i(\xv,x))<v(\xv\cup x)$ for $x<h(\xv)$, and 
 $Q(x, i(\xv,x))\geq v(\xv\cup x)$ for $x\geq h(\xv)$. 
Subject to obvious adjustments,
the definition of $h(\xv)$  makes sense for every $\xv\neq {\bf 0}$ in the untruncated 
case, and for $x_{m-1}>0$ in the truncated.

\par We are ready to show that memoryless rules are not optimal.
\begin{proposition} If $V({\cal F})<\infty$ then $V({\cal F})<V({\cal M})$.  
\end{proposition}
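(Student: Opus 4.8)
If $V({\cal M})=\infty$ there is nothing to prove, since $V({\cal F})<\infty$; so assume $V({\cal M})<\infty$. The plan is to extract a contradiction from the form of the optimal rule. By self-similarity (Lemma~\ref{selfsim}) and the single-crossing description of $h$ above, dynamic programming forces any rule attaining $v(\varnothing)=V({\cal F})$ to stop at an atom $(t,X_t)$ whenever $\bar t X_t<h(\bar t\Xv_{t-})$ and to continue whenever $\bar t X_t>h(\bar t\Xv_{t-})$, the boundary $\{\bar t X_t=h(\bar t\Xv_{t-})\}$ carrying no mass. A memoryless rule (\ref{mem-f}) instead stops exactly on $\{X_t\le f(t)\}=\{\bar t X_t\le\bar t f(t)\}$. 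Hence, if some memoryless rule were optimal, these two stopping regions would agree off a null set, forcing $\bar t f(t)=h(\bar t\xv)$ for almost every $t$ and almost every configuration $\xv$ occurring as a value of $\Xv_{t-}$; since those configurations are all the infinite locally finite subsets of ${\mathbb R}_+$, a family independent of $t$ and closed under scaling, letting $t$ and $\xv$ vary yields $\bar t f(t)\equiv c$ and $h\equiv c$ for one constant $c$. In particular $h$ would be constant.

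The crucial step is that $h$ is \emph{not} constant. For a configuration $\xv$ whose least entry $x_1$ is large the crossing point $h(\xv)$ stays bounded --- it tends to the finite, positive crossing point of the empty configuration as $x_1\to\infty$ --- so one can fix a reachable $\xv$ with $0<h(\xv)<x_1$. Raise $x_1$ to some $x_1'\in(x_1,x_2)$, obtaining a reachable $\yv$. On the range $y<x_1$ the increasing immediate-loss curve $Q(y,i(\cdot,y))$ is the same under $\xv$ and under $\yv$, while the decreasing curve of continuation values strictly drops, $v(\yv\cup y)<v(\xv\cup y)$, because $\xv\cup y\prec\yv\cup y$ and $v$ is strictly monotone (Lemma~\ref{L2}(ii); the values in play are finite). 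Hence the two curves cross strictly earlier under $\yv$, i.e. $h(\yv)<h(\xv)$, so $h$ takes distinct values at reachable configurations and no memoryless rule can be optimal in ${\cal F}$.

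It remains to turn this into the strict inequality $V({\cal F})<V({\cal M})$. The clean route is to show $V({\cal M})=\inf_f L(f)$ is attained: restrict to $f$ with $L(f)\le V({\cal M})+1$, extract a pointwise limit by Helly's selection theorem for monotone functions, and use Fatou's lemma in (\ref{m-rule-risk}) for lower semicontinuity of $L$ along such a sequence; the one delicate point is the terminal term $e^{-F(1)}q(\infty)$, which is harmless because finite risk forces $F(1)=\infty$ when $q(\infty)=\infty$, while $v(\varnothing)<q(\infty)$ by Lemma~\ref{cont2}(ii) when $q(\infty)<\infty$. The minimiser $f^{*}$ is a concrete memoryless rule; since $h$ is non-constant while $f^{*}$'s threshold depends on $t$ only, the stopping regions $\{X_t\le f^{*}(t)\}$ and $\{\bar t X_t<h(\bar t\Xv_{t-})\}$ differ on a positive-probability set (outside the null boundary), and there $\tau_{f^{*}}$ takes a strictly suboptimal action; hence $\tau_{f^{*}}$ is not optimal, $L(f^{*})>v(\varnothing)$, and $V({\cal M})=L(f^{*})>V({\cal F})$.

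\noindent The hardest part, I expect, is the last quantitative step: one must verify that the difference set above has positive probability and that on it the Bellman comparison is genuinely strict --- for instance by isolating an event (a record atom of moderate value over a history sparse below it) on which the optimal continuation value lies strictly below the conditional loss $Q(\bar t X_t,I_t)$ that $\tau_{f^{*}}$ would incur --- and then propagating that gap through the strict monotonicity of $Q$ and $v$. If one wishes to bypass the attainment/semicontinuity argument, the alternative is to bound the improvement over an \emph{arbitrary} finite-risk memoryless rule below by a universal $\delta>0$, via the dichotomy that such an $f$ is either large enough on some time interval that $\tau_f$ stops at dominated (rank $\ge 2$) atoms with at least a fixed frequency, or, being forced by $\int_0^1 f=\infty$ to blow up near $t=1$, stops too late at atoms the optimal rule would already have accepted; making that dichotomy uniform in $f$ is then the real work.
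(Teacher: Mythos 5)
Your core argument is the same as the paper's: an ${\cal F}$-optimal memoryless rule would have to reproduce the Bellman threshold, forcing $\bar t f(t)=h(\bar t\,\Xv_{t-})$ almost surely, which is impossible because $h$ genuinely depends on the configuration. Two remarks on the first half. First, the paper merely asserts that $h$ is not constant; your perturbation argument (raise $x_1$, the continuation-value curve drops strictly while the immediate-loss curve $Q(y,i(\cdot,y))$ is unchanged for $y<x_1$, so the crossing point moves strictly left) is a legitimate way to supply this, provided you note that Lemma \ref{L2}(ii) as printed has the inequality in the direction opposite to its own proof and to its later use (``$v(\xv\cup x)$ is nonincreasing in $x$''); you are silently using the corrected direction, which is the right one. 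Second, the paper's own concrete mechanism is different and worth knowing: for an atom $(t,x)$ lying \emph{above} the graph of $f$, it packs finitely many history points $x_r$ with $r>i$ just above $x$ so that the half-prophet bound (\ref{hp}) makes $v(\bar t(\xv\cup x))$ arbitrarily large while the stopping loss $Q(\bar t x,i)$ is unaltered; this shows the memoryless rule forgoes a strict improvement on an event of positive probability, and it makes the gap as large as one pleases rather than merely nonzero.

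The genuine weak point is your passage from ``no memoryless rule is optimal'' to $V({\cal F})<V({\cal M})$ via attainment of $\inf_f L(f)$. The Helly--Fatou step does not give lower semicontinuity of $L$: Fatou yields $\liminf_n F_n(1)\geq F(1)$, so the terminal term $e^{-F(1)}q(\infty)$ is \emph{upper} semicontinuous along a pointwise-convergent monotone sequence, not lower; a minimizing sequence with $F_n(1)=\infty$ (terminal term $0$) can converge pointwise to a limit with $F(1)<\infty$, whose terminal term is positive (indeed infinite when $q(\infty)=\infty$). So the dismissal of the terminal term as ``harmless'' is unjustified, and the existence of a minimiser $f^*$ --- on which your entire strictness argument is hung --- is not established. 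One can likely repair this (e.g.\ replace the Helly limit by $\max(f,f_b)$ and control the change in the first term, or prove a uniform improvement $\delta>0$ over all $f$ with $L(f)\leq V({\cal M})+1$ using the paper's packing construction on a fixed-probability event), but as written the step fails. To be fair, the paper is also silent on this point --- it improves each individual memoryless rule without addressing uniformity or attainment --- so you have correctly located where the real work lies; you just have not done it.
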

\proof For a memoryless rule with threshold function $f$  to be optimal,
we must have 
 $v(\bar{t}\Xv_{t})< Q(\bar{t} X_t,i(\Xv_{t-},X_t))$
for $X_t>f(t)$, and 
 $v(\bar{t}\Xv_{t})> Q(\bar{t} X_t,i(\Xv_{t-},X_t))$
for $X_t<f(t)$, because otherwise the rule can be improved.
This forces $f(t)=h(\bar{t}\xv)$, which does not hold since $h$ is not constant.

\par
To  demonstrate concretely how a memoryless rule with threshold $f$ can be improved
 let us apply the same 
idea as in \cite[Section 5]{Bruss2}. Assume $q(\infty)=\infty$.
Suppose $(t,x)$ is  above the graph of $f$, hence should be skipped by the memoryless rule.
Let $i=i(\xv,x)$ be the initial rank under history $\xv$. Varying finitely many of the components $x_r$ ($r>i$) 
 we can achieve
that the bound (\ref{hp}) be arbitrarily large while the expected loss of stopping   remains unaltered $Q(\bar{t} x,i)$.
For such $\xv$ we have $v(\bar{t}(\xv\cup x))>Q(\bar x,i(\xv,x))$ hence stopping strictly reduces the risk
on some event of positive probability.
\endpf

\par Based on the function $h:{\cal X}\to \overline{\mathbb R}_+$, we construct a predictable process
$$H_t:=h(\Xv_{t-}\setminus \{X_{1,r}: T_r< t,\,X_{1,r}<h(\Xv_{T_r-})\})~~~~~~(t\in[0,1]).$$
Let $\Yv_t$ be a thinned sequence obtained by removing
the terms in $\{\cdots\}$   from $\Xv_{t-}$,
so $H_t=h(\Yv_t)$.
Intuitively, $H_t$ is a history-dependent threshold which
depends on the configuration of atoms $\Xv_{t-}$ that arrived on $[0,t[$ and are above the curve $(H_s,\,s\in[0,t[)$.
As $t$ starts increasing from $0$, the process $H_t$ coincides  with $h(\Xv_{t-})$ as long as there are no atoms below the threshold,
while at the first moment this occurs the atom is discarded, and does not affect the future path of the process.

\vskip0.5cm\noindent
{\bf Remark}
The reason for thinning  $\cal P$ is that we wish to see $(H_t)$ as an increasing process defined for all $t$, as opposed to considering
$h(\Xv_{t-})$ killed as soon as the threshold is undershoot.
\vskip0.5cm

\par 
We list some properties of $(H_t)$ which follow directly from the definition and Lemmas \ref{cont2} and \ref{L2}
(under $\Xv_0=\varnothing$). 
\begin{lemma}
\begin{itemize}
\item[\rm (i)] $(H_t)$ is nondecreasing  on $[0,1[$\,.
\item[\rm (ii)] If $V({\cal F})<\infty$ then $H_0$ is the unique root of $Q(x,1)=v(x\cup\inftyv)$.
\item[\rm (iii)] $H_{1-}=Y_{1,m-1}$ if $q$ is truncated at $m$ and $q(m-1)<q(m)$.
\item[\rm (iv)] $H_{1-}=\infty$ if $q$ is not truncated.
\end{itemize}
\end{lemma}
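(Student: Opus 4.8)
The plan is to read off all four items from the construction of $(H_t)$ together with the properties of $h$ collected in Lemmas~\ref{cont2} and \ref{L2}, handling them in turn. First, for \textbf{(i)}: the process $H_t=h(\Yv_t)$ can change only at an arrival time $T_r<1$ at which the new atom is retained, i.e.\ $X_{1,r}\ge H_{T_r}$, and it then passes from $h(\Yv_{t-})$ to $h(\Yv_{t-}\cup X_{1,r})$. So everything reduces to the monotonicity fact
\[
x\ge h(\xv)\ \Longrightarrow\ h(\xv\cup x)\ge h(\xv).
\]
This I would prove as follows: for $z<x$ one has $i(\xv\cup x,z)=i(\xv,z)$, so the stopping--loss curve $z\mapsto Q(z,i(\,\cdot\,,z))$ is untouched by inserting $x$, whereas the continuation value $z\mapsto v(\,\cdot\,\cup z)$ does not decrease under this insertion (Lemma~\ref{L2}, applied to the pair $(\xv\cup x)\cup z\prec\xv\cup z$). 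Hence any $z<h(\xv)$ --- which forces $z<x$ --- still satisfies $Q(z,i(\xv\cup x,z))=Q(z,i(\xv,z))<v(\xv\cup z)\le v((\xv\cup x)\cup z)$, so it lies in the set defining $h(\xv\cup x)$; letting $z\uparrow h(\xv)$ gives the inequality. Passing from a single insertion to the full statement on $[0,1[$ is then a monotone limit over the (a priori infinitely many) retained atoms in a bounded time window; discard times leave $\Yv_t$, hence $H_t$, unchanged.

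For \textbf{(ii)}, since $\Xv_0=\varnothing$ no atom precedes time $0$, so $\Yv_0=\varnothing$ and $H_0=h(\varnothing)$. By its definition $h(\varnothing)$ is the crossover abscissa of $x\mapsto Q(x,1)$ and $x\mapsto v(x\cup\inftyv)$, and it remains to see these cross exactly once. The first map is continuous and strictly increasing (Lemma~\ref{L2}(i)), is finite for every $x$ because $V({\cal F})<\infty$ excludes $Q(b,1)=\infty$ by the Proposition above, and runs from $Q(0,1)=q(1)$ up to $q(\infty)$. The second is continuous (Lemma~\ref{cont2}(i), using $v(\varnothing)=V({\cal F})<\infty$) and strictly decreasing (a smaller retained observation is a worse position --- Lemma~\ref{L2}), running from $v(0\cup\inftyv)$ down to $v(\varnothing)=V({\cal F})$. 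Finally $v(0\cup\inftyv)>q(1)$, since the half-prophet bound~(\ref{hp}) gives $v(0\cup\inftyv)\ge q(2)\ge q(1)$ and is strict there because the series is a convergent finite sum; and $V({\cal F})<q(\infty)$, since any memoryless rule with a small constant threshold does strictly better than never stopping. Two monotone continuous curves with these endpoint inequalities meet exactly once, and that meeting point is $h(\varnothing)=H_0$.

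For \textbf{(iii)} and \textbf{(iv)} the limit $H_{1-}=\lim_{t\to1}H_t$ exists by (i), and the tool is self-similarity (Lemma~\ref{selfsim}): declining the atom $(t,x)$ leaves the continuation value $v\bigl(\bar t(\Yv_{t-}\cup x)\bigr)$, while stopping at it costs $Q(\bar tx,i(\Yv_{t-},x))\to q\bigl(i(\Yv_{t-},x)\bigr)$ as $t\to1$. If $q$ is truncated at $m$ then $v$ depends only on the first $m-1$ coordinates, and $Q(x,i(\xv,x))=q(m)=q(\infty)$ once $x\ge x_{m-1}$; hence $h(\xv)\le x_{m-1}$ always, so $H_t\le(\Yv_t)_{m-1}$, which is nonincreasing in $t$ and bounded below by $H_0>0$. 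Its first $m-1$ coordinates being bounded, $\bar t$ times them tends to ${\bf 0}$, so the continuation value tends to $q(m)$ by Lemma~\ref{cont2}(iii); since stopping below $(\Yv_{t-})_{m-1}$ keeps the rank at most $m-1$, its cost stays bounded away from $q(m)$, so every such value eventually lies below the threshold, i.e.\ $H_t=(\Yv_t)_{m-1}$ for $t$ near $1$. From then on a retained atom exceeds $(\Yv_t)_{m-1}$, the first $m-1$ coordinates freeze, and $H_{1-}=\lim_t(\Yv_t)_{m-1}=Y_{1,m-1}$. If $q$ is not truncated then $q(r)<q(\infty)$ for every $r$, so stopping at a value with any fixed finite number of atoms below it costs strictly less than $q(\infty)$, whereas the continuation value rises to $q(\infty)$ as $\bar t\to0$ (by Lemma~\ref{cont2}(iii) when $q(\infty)<\infty$, and because a strictly positive probability of $\tau=1$ would force infinite risk when $q(\infty)=\infty$); hence for $t$ near $1$ the threshold exceeds every fixed level, which is (iv).

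\textbf{Main obstacle.} Items (i) and (ii) amount to bookkeeping plus a one-dimensional intermediate-value argument. The delicate point is the $t\to1$ analysis behind (iii)--(iv): one must control $v\bigl(\bar t(\Yv_{t-}\cup x)\bigr)$ as $\bar t\to0$ uniformly over the range of values $x$ that can matter, while $\Yv_t$ is itself a configuration growing without bound; the truncated case is tractable because only finitely many coordinates intervene and they ultimately stabilise, but the non-truncated case --- especially bounded non-truncated $q$, where one cannot fall back on $q(\infty)=\infty$ --- needs Lemma~\ref{cont2}(iii) together with a contradiction argument showing that a finite limiting threshold would be strictly improvable near $t=1$.
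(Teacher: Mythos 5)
The paper does not actually prove this lemma: it only records that the four assertions ``follow directly from the definition and Lemmas \ref{cont2} and \ref{L2}'', so your write-up is not competing with an alternative argument but supplying the omitted one, and it does so along exactly the lines the paper indicates. Your reductions are the right ones: (i) to the single-insertion monotonicity $x\ge h(\xv)\Rightarrow h(\xv\cup x)\ge h(\xv)$, proved by noting that inserting an atom above the threshold leaves $Q(z,i(\cdot,z))$ unchanged for $z$ below it while raising $v(\cdot\cup z)$; (ii) to an intermediate-value argument pitting the strictly increasing, finite (by Proposition 4) function $Q(\cdot,1)$ against the continuous nonincreasing $v(\cdot\cup\inftyv)$, with the correct endpoint inequalities $v(0\cup\inftyv)>q(2)\ge q(1)$ and $V({\cal F})<q(\infty)$; (iii)--(iv) to the behaviour of the rescaled continuation value as $\bar t\to 0$ via Lemmas \ref{selfsim} and \ref{cont2}(iii). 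Two remarks. First, you use Lemma \ref{L2}(ii) in the form $\xv\prec\yv\Rightarrow v(\xv)\ge v(\yv)$; this is the version supported by the paper's own proof of that lemma and by its later use (``$v(\xv\cup x)$ is nonincreasing in $x$''), the inequality displayed in the statement of Lemma \ref{L2}(ii) being evidently reversed by a typo --- you chose the right direction. Second, two steps deserve one more line each: in (i), because the retained arrival times are dense in every interval, passing from a single insertion to monotonicity on $[s,t]$ needs a short extra argument (for instance, let $u$ be the infimum of times where $H_r<H_s$, note that every atom retained before $u$ lies above $H_s$, conclude $H_u\ge H_s$, and rule out failure just after $u$); and in (iv), the phrase ``any fixed finite number of atoms below it'' rests on the fact that ${\cal P}\cap([0,1]\times[0,M])$ is a.s.\ finite, which is what bounds $i(\Yv_{t-},x)$ uniformly over $x\le M$ and keeps $Q(\bar t x,i)$ bounded away from $q(\infty)$ while $v(\bar t(\Yv_{t-}\cup M))\to q(\infty)$. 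With those two sentences added the proof is complete and is, as far as one can tell, exactly the argument the author had in mind.
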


\par To gain some intuition about the behaviour  of $(H_t)$  we shall gradually increase the  complexity of  loss function.
In the simplest instance of the best-choice problem, $v$ depends only on $x_1$ (see \cite[Equations (8) and (13)]{BC}) and there is an
 explicit formula for threshold
$$H_t=\min(f_b(t),Y_{t,1})~~~~~(b=0.804\cdots).$$
That is to say, as $t$ starts increasing from $0$,
$H_t$ is a deterministic {\it drift} process until it hits the level of the lowest atom above the graph.
The drift is hyperbolic due to self-similarity of $\cal P$ (Lemma \ref{selfsim}). 
After this random time, $H_t$ has a {\it flat}, which appears because it is never optimal to stop at observation with initial rank $2$ or larger.
On the first part of the path $H_t$ satisfies $Q(H_t,1)=v(\bar{t}(\Yv_t\cup H_t))$, and on the second $Q(H_t,1)<v(\bar{t}(\Yv_t\cup H_t))$.
\par If $q$ is strictly truncated at $m=3$, meaning that $q(2)<q(3)=q(\infty)$, a new effect appears.
For $t$ sufficiently small, as long as $H_t<Y_{t,1}$ each $1$-record above the threshold causes a {\it jump}, because 
$v(\bar{t}\Yv_t)$ jumps and the threshold must go up to compensate. 
Thus $(H_t)$ has both drift and jump components.
The jump locations are the $1$-record times accumulating near $0$ at rate $t^{-1}\dd t$.
As $H_t$ hits $Y_{t,1}$, there is a possible flat, then a period of deterministic drift where $Q(H_t,2)=v(\bar{t}(\Yv_t\cup H_t))$,
and finally there is a flat at some level $Y_{t,2}$ (then $Y_{t,2}=Y_{1,2}$).

\par For $q$  strictly truncated at  $m>3$, the jump locations are included  in $m-2$ record-time  processes
of atoms with initial rank at most $m-2$, there are
$m-1$ potential flats and a drift component between the flats. We do not assert that the number of flats is always exactly $m-1$, because it is not at all clear if $(H_t)$ can break a level
$Y_{t,r}$ for $r<m-1$ by jumping through it, hence sparing a flat.
\par Now suppose that $q$ is not truncated and that $H_t<\infty$ everywhere on $[0,1[$ with probability one. 
Then, outside the union of flat intervals, {\it every} arrival above $H_t$ causes a jump,
thus the set of jump locations is dense there. The number of flats may be infinite,
and outside the flats
$Q(H_t,i(\Yv_{t}, H_t))=v(\bar{t}(\Yv_t\cup H_t))$.

\par In the case of Robbins' problem, we have by linearity of the loss $Q(x,i+1)-Q(x,i)=1$  and 
$v(\xv\cup x)-v(\xv)<1$  (if $v(\xv\cup x)<\infty$).
Thus $Q(x,i(\xv,x))=v(\xv\cup x)$  implies $Q(x,i(\xv,x)+1)>v(\xv\cup x\cup x')$ for arbitrary $x'$.
But this means that  $(H_t)$ cannot cross any $Y_{t,i}$ by a jump.
It follows that $(H_t)$ has infinitely many flats at all levels $Y_{1,r}$  $(r\in\Nat)$. 
The presence of all three effects (drift, jumps and flats) and the lack of  independence of increments property
all leave a little hope for a kind of more explicit description of $(H_t)$.

\par The optimality principle requires stopping 
at atom $(t,x)$ when the history $\Xv_{t-}=\xv$ satisfies $Q(\tbar x,i(\xv,x))<v(\tbar \xv)$, whence
the following analogue of (\ref{optF}).

\begin{proposition}\label{main}
If  $V({\cal F})<\infty$ then $H_t<\infty$ a.s. for all $t<1$ and 
the stopping rule 
$$\tau^*:= \inf\{t:X_t<H_t\}~~~~ (\inf\varnothing=1)$$
is optimal in $\cal F$.
\end{proposition}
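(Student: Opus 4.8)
\medskip
\noindent\emph{Proof strategy.}
The plan is a verification argument built on the value function $v$ and the Markov process $((\Xv_t,I_t),\,t\in[0,1])$. Two points need to be prepared. First, $\tau^*$ is a genuine stopping rule only if $H_t<\infty$ a.s.\ for all $t<1$. Before $\tau^*$ no atom has fallen below the curve, so the thinning in the definition of $H$ is inactive there and it suffices to bound $h$ on the un-thinned history. Starting from $v(\varnothing)=V({\cal F})<\infty$ and using self-similarity (Lemma \ref{selfsim}), one gets $\ex\,[v(\tbar\Xv_t)]\le V({\cal F})$ for each $t$, whence $v(\tbar\Xv_t)<\infty$ a.s., simultaneously for $t$ in a countable dense set and then for all $t<1$ by monotonicity of $t\mapsto v(\tbar\Xv_t)$; by Lemma \ref{cont2}(i) the map $x\mapsto v(\tbar\Xv_{t-}\cup x)$ is then finite, continuous and nonincreasing. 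Since $Q(x,i(\yv,x))$ is strictly increasing in $x$ with limit $q(\infty)$ (Lemma \ref{L2}(i)), the two curves must meet: immediately when $q(\infty)=\infty$, and otherwise because $v(\yv)<q(\infty)$ strictly — by Lemma \ref{cont2}(ii) in the bounded untruncated case and by the truncated part of Lemma \ref{L2} in the truncated case (valid once $y_{m-1}>0$, which holds on the path since $H_0>0$ keeps the retained atoms off the axis). Hence $h<\infty$ there and $\tau^*$ is a canonical stopping time.

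For optimality I would run the dynamic-programming argument. Let $W_t$ be the value process: $W_t=v(\tbar\Xv_t)$ away from record times, and $W_t=\min\{Q(\tbar X_t,I_t),\,v(\tbar\Xv_t)\}$ at a record time (the Snell envelope, with the immediate-stop option available exactly when a current atom is present). By the Markov property, self-similarity and the optimality principle for $v$, $W$ is a nonnegative supermartingale with $W_0=v(\varnothing)=V({\cal F})$. By the definition of $h$, the event $\{X_t<H_t\}$ coincides with $\{Q(\tbar X_t,I_t)<v(\tbar\Xv_t)\}$, i.e.\ with the region where stopping strictly beats continuing; thus $\tau^*$ stops exactly there and continues on the complement, where continuing is at least as good, so $(W_{t\wedge\tau^*})$ is a martingale and $\ex\,[W_{\tau^*\wedge t}]=V({\cal F})$ for every $t<1$. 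At $\tau^*$ one has $W_{\tau^*}=Q(\tbar X_{\tau^*},I_{\tau^*})$, which is precisely $\ex\,[q(R_{\tau^*})\giv{\cal F}_{\tau^*}]$; on $\{\tau^*=1\}$, since $\tbar\Xv_t\to{\bf 0}$, Lemma \ref{cont2}(iii) gives $W_{1-}=q(\infty)$. Consequently $W_{\tau^*\wedge t}\to W_{\tau^*\wedge 1}$ a.s.\ as $t\up 1$ with $\ex\,[W_{\tau^*\wedge 1}]=\ex\,[q(R_{\tau^*})]$ by (\ref{loss}), and Fatou's lemma yields $\ex\,[q(R_{\tau^*})]\le\liminf_{t\up 1}\ex\,[W_{\tau^*\wedge t}]=V({\cal F})$; the reverse inequality is the definition of $V({\cal F})$. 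In the untruncated unbounded case one checks separately that $\prob(\tau^*=1)=0$, using $H_{1-}=\infty$ and $\prob(\tau^*>t)\le\exp(-\int_0^t H_s\,\dd s)$, so the risk is genuinely finite.

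I expect the rigorous handling of the martingale/DP step to be the main obstacle, because of the severe discontinuity of $v$: when $q(\infty)=\infty$, $v$ is discontinuous at every point where it is finite, so $W$ carries no a priori path regularity and the only continuity input available is the one-sided estimates of Lemmas \ref{cont1} and \ref{cont2}(i) (inserting a single atom, or shifting the whole configuration by a scalar). This is exactly why the threshold is built through the thinned sequence: it keeps the relevant configurations on the side where $v$ is finite and the one-sided estimates apply. A second delicate point is that $W$ jumps at the record times, a set dense in $[0,1]$ and accumulating at $0$; proving that $W$ is a genuine (not merely local) supermartingale, and that its jumps have the conditional mean dictated by the optimality principle, reduces to the integrability consequence of (\ref{Ass}) — that finiteness of $\ex\,[q(R_\tau)]$ survives shifting the rank by an independent Poisson variable. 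I would therefore establish the finiteness of $H_t$ and the estimate $\ex\,[v(\tbar\Xv_t)]\le V({\cal F})$ in tandem, since each is used in the other.
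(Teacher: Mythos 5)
Your verification strategy (Snell envelope $W$, martingale up to $\tau^*$, Fatou at $t\up 1$) is a genuinely different route from the paper, which never touches the continuous-time martingale machinery: it handles bounded $q$ by citing a general theorem of Shiryaev (the gain $Q(x,i(\xv,x))$ being bounded and continuous there), and handles unbounded $q$ by truncating the \emph{initial} rank, $Q^{(m)}(x,i)$ with the rank capped at $m$, reducing to discrete-time optimal stopping along the $i$-record sequences, and then passing to the limit $m\to\infty$ by monotone convergence of $H^{(m)}_t$ and $v^{(m)}$. That reduction is precisely what lets the paper sidestep the two difficulties you yourself flag --- the dense set of jump times and the severe discontinuity of $v$ when $q(\infty)=\infty$. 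As written, your proposal does not overcome them: the assertion that $W$ is a genuine (not local) super-/sub-martingale and that $(W_{t\wedge\tau^*})$ is a martingale is exactly the content of the proposition in this setting, and you explicitly leave it as ``the main obstacle'' rather than proving it. So the core of the argument is still missing.

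There is also a concrete error in the preparatory step. The inequality $\ex[v(\tbar\Xv_t)]\le V({\cal F})$ is backwards: $\ex[v(\tbar\Xv_t)]$ is, by Lemma \ref{selfsim} and the Markov property, the risk of the rule that refuses to stop before $t$ and then acts optimally, hence it is $\ge V({\cal F})$ (the value process of a minimisation problem is a submartingale, so its expectation increases). Consequently this bound cannot deliver $v(\tbar\Xv_t)<\infty$ a.s.; to get an upper bound on $\ex[v(\tbar\Xv_t)]$ you must exhibit a rule with finite risk that ignores the history up to $t$, which costs an extra Poisson number of rank units and is controlled only through assumption (\ref{Ass}) via Lemma \ref{cont1}. (A second, minor point: for a minimisation problem your $W$ is a submartingale, not a supermartingale; the stopped-process martingale claim and the Fatou step are fine in outline once that is corrected.) In short: the skeleton of a verification proof is sound and could in principle replace the paper's truncation argument, but the finiteness estimate needs repairing and the dynamic-programming/martingale step needs an actual proof --- which is where the paper's truncation of the initial rank and discrete-time reduction do the real work.
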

\proof
For bounded $q$ a general result \cite[Theorem 3, p. 127]{Shiryaev} is applicable since
the function $Q(x,i(\xv,x))$ is bounded and continuous on ${\cal X}\times\Nat$.
\par  Alternatively, for $q$ truncated at some $m$ one can use results of the optimal stopping theory for discrete-time processes.
To fit exactly in this framework,  focus on the sequences of $i$-records (for $i\leq m-1$) that arrive on $[\epsilon,1]$, and then let
$\epsilon\to 0$. The general bounded case follows in the limit $m\to\infty$.

\par For unbounded $q$ we use another kind of truncation (analogous to that in \cite[Section 4]{Bruss1}). For $m$ fixed, let  
$Q^{(m)}(x,i)=Q(x,\max(i,m))$ and consider the stopping problem with loss
$Q^{(m)}(\bar{t}x,i(\xv,x)$ for stopping at $(t,x)$ with history $\xv$.
This corresponds to ranking $x$ relative to at most $m$ atoms before $t$, but fully accounting all future observations
below $x$.
In this problem  it is never optimal to  stop at atom with relative rank $m$ or higher.
Indeed,  stopping at $(t,x)$ with such rank can be improved by continuing and then exploiting  any hyperbolic memoryless 
 rule
with $b<\bar{t}x$ (stopping is guaranteed before $1$ since the subgraph of $f_b$ has infinite area).
By discrete-time methods, 
 optimality of the rule $\tau^{(m)}=\inf\{t: X_t< H_t^{(m)}\}$ in the truncated problem is 
readily acquired, with 
a nondecreasing predictable process $(H_t^{(m)})$
defined through $h^{(m)}(\xv):=\sup\{x: Q^{(m)}(x, i(\xv,x))<v^{(m)}(\xv\cup x)\}$, where  $v^{(m)}$ is 
the minimum loss analogous to $v$. 
Obviously,  $Q^{(m)}(x, i(\xv,x)), v^{(m)}(\xv)$ is nondecreasing in $m$. 
\par A decisive property of this kind of truncation is that 
$Q^{(m)}(x, i)=Q(x, i)$ for $m\geq i$. This  implies that $H_t^{(m)}$ is eventually nondecreasing in $m$ and 
there exists a pointwise limit $H'_t=\lim_{m\to\infty}H^{(m)}_t$, which defines  
a legitimate stopping rule $\tau'$ as the time of the first arrival under $H'$.
Denote for shorthand $L(\tau)=\ex[Q(X_\tau,I_\tau)], L^{(m)}(\tau)=\ex[Q^{(m)}(X_\tau,I_\tau)]$ and denote 
$u, u^{(m)}$ the minimum risks (so $u=V({\cal F})$).
Trivially, $\lim_{m\to\infty} u^{(m)}\leq u$. On the other hand, by monotone convergence $L^{(m)}(\tau')\uparrow L(\tau)\geq u$.
If follows that $u^{(m)}\leq u$ and $\tau'$ is optimal.
The convergence $v^{(m)}(\xv)\uparrow v(\xv)$ is shown in the same way, from which $H'_t=H_t$ and $\tau'=\tau^*$ is optimal.
\endpf
\vskip0.5cm
\noindent
{\bf Remark.} Assumption (\ref{Ass}) limits, by the virtue of Lemma \ref{cont1}, the risks of {\it all\,} stopping rules
under various initial data, 
while we are really interested only in the properties of optimal or $\epsilon$-optimal rules. 
We feel that Proposition \ref{main} is still valid under the sole condition $V({\cal F})<\infty$, 
but history dependence makes proving this more difficult than in the analogous situation with rank rules \cite{GianSam}.

\vskip0.5cm

\par As a by-product, we have shown that the risk  in the  truncated problem with loss function $q(\min(r,m))$
converges to $V({\cal F})$. Indeed, the loss is squeezed between the loss in the modified truncated problem and the original untruncated 
loss. 
\par From the formula for the distribution of the optimal rule,  
$$\prob(\tau^*>t)=\ex\left[\exp\left(-\int_0^s H_s \dd s\right)\right]\,,$$
and arguing as in Lemma \ref{Borel} we see that
  $H_t$ cannot explode at some $t<1$ if $V({\cal F})<\infty$.

The risk can be bounded from below in the spirit of (\ref{m-rule-risk}) as
$$\ex[q(R_{\tau^*})]\geq \ex\left[\int_0^1 \exp\left(-\int_0^s H_s \dd s\right) \int_0^{H_t}Q(\bar{t}x,\phi_H(x))\dd x \right],$$
where $\phi_H(x)$ is the number of flats of $(H_t)$ below $x$.
If the loss function $q$ has the property that the flats of $(H_t)$ occur at all levels $X_{1,r},~r\in\Nat$ (like in 
Robbins' problem) the equality holds. 
The same kind of estimate is valid for every stopping rule
$\tau$ defined by means of an arbitrary nondecreasing predictable process like $(H_t)$.


\vskip0.5cm
\noindent
{\bf 4. The infinite Poisson model as a limit of finite-$n$ problems} 
To connect the finite-$n$ problem with its Poisson counterpart
it is convenient to realise iid sequence in the following way \cite{Gianini, GnFI, Denis}.
Divide the strip $[0,1]\times{\mathbb R}_+$ in $n$  vertical strips of the same width $1/n$. Let $X_j$ be the atom of $\cal P$ with
the lowest $x$-value. By properties of the Poisson process, 
$X_1,\ldots,X_n$ are iid with exponential distribution of rate $1/n$.
Note that optimal stopping of $X_1,\ldots,X_n$ is equivalent to optimal stopping of $\cal P$ with the lookback option allowing
the observer to return to any atom within a given $1/n$-strip (equivalently, at time $(j-1)/n$ to foresee the configuration of atoms up to 
time $j/n$). This embedding in $\cal P$ immediately implies $V_n({\cal F}_n)<V({\cal F})$. Moreover, as $n\to\infty$,
each  $i$-record process derived from $X_1,\ldots,X_n$ converges almost surely to the $i$-record process derived from $\cal P$.
From this one easily concludes, first for truncated then for any bounded $q$, 
 that $V_\infty({\cal F})=V({\cal F})$, where $V_\infty({\cal F})=\lim_{n\to\infty} V_n({\cal F}_n)$ as defined
in Introduction.

\par For the general $q$, the relations 
$$V_\infty({\cal F})=V({\cal F}), ~V_\infty({\cal R})=V({\cal R}), ~V_\infty({\cal M})=V({\cal M})$$
follow  (as in \cite{Assaf, BG, Bruss2, CMRS, Gianini, Mucci1}) from that in the truncated case,
by combining monotonicity of risks in the truncation parameter $m$ with the monotonicity in $n$ stated in the next lemma.
\begin{lemma} $V_n({\cal F}_n), V_n({\cal R}_n), V_n({\cal M}_n)$ are increasing with $n$.
\end{lemma}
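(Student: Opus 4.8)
The plan is to prove the three assertions together, by establishing $V_n({\cal C}_n)\le V_{n+1}({\cal C}_{n+1})$ for ${\cal C}\in\{{\cal F},{\cal R},{\cal M}\}$ and every $n$, through a coupling of the $n$-sample and the $(n+1)$-sample problems. (For an $n$-sample the final ranks never exceed $n$, so $q$ is effectively bounded and finiteness is not an issue here.)

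The coupling I would use: take $X_1,\dots,X_n$ iid uniform on $[0,1]$ for the $n$-problem, draw an independent $Z$, uniform on $[0,1]$, and an independent $K$, uniform on $[n+1]$, and insert $Z$ into the arrival order at position $K$, so that $(X_1,\dots,X_{K-1},Z,X_K,\dots,X_n)$ is again a string of $n+1$ iid uniforms, which we use to realise the $(n+1)$-problem; thus $K$ is the uniformly random index of the inserted ``phantom'' observation and is independent of everything about the genuine sample. Now fix $\tau\in{\cal C}_{n+1}$ with $\ex[q(R_\tau)]\le V_{n+1}({\cal C}_{n+1})+\epsilon$ (optimal when ${\cal C}={\cal F}$ or ${\cal R}$, a threshold rule when ${\cal C}={\cal M}$), and let $\sigma$ be the $n$-rule which carries $(Z,K)$ as internal randomisation and runs $\tau$ on the inserted string: as long as $\tau$ stops at one of the genuine $X_j$, so does $\sigma$ --- admissible because up to that instant the behaviour of $\tau$ is a function of $Z$, $K$ and $X_1,\dots,X_j$ alone, all available to $\sigma$ --- while if $\tau$ stops at the phantom $Z$, the rule $\sigma$ continues and stops at the first later genuine observation whose value does not exceed $Z$, defaulting to $X_n$ if there is none. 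One checks $\sigma\in{\cal C}_n$: for ${\cal M}$, integrating out $(Z,K)$ leaves a threshold rule; for ${\cal R}$, $\sigma$ only ever compares relative ranks of genuine observations; for ${\cal F}$ there is nothing to check. Since internal randomisation cannot lower the infimum over ${\cal C}_n$, this gives $V_n({\cal C}_n)\le\ex[q(R_\sigma)]$.

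It remains to compare $\ex[q(R_\sigma)]$ with $\ex[q(R_\tau)]$, and here the real work lies. Whenever $\sigma$ ends up selecting a genuine observation of value at most the one $\tau$ selected --- the routine case $\tau\neq K$, and the ``first below $Z$'' case --- the rank of that value among the $n$ genuine observations is at most its rank among all $n+1$, hence at most $R_\tau$, so $q(R_\sigma)\le q(R_\tau)$ pointwise by monotonicity of $q$. Consequently $\ex[q(R_\sigma)]\le\ex[q(R_\tau)]+\Delta$, where $\Delta$ is the contribution of the exceptional event on which $\tau$ stops at the phantom $Z$ while every genuine observation arriving after the phantom exceeds $Z$, so that $\sigma$ is forced onto $X_n$. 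To control $\Delta$ I would use that $\prob(\tau=K)=1/(n+1)$ exactly (since $K$ is independent of $\tau$), that on $\{\tau=K\}$ near-optimality of $\tau$ confines $Z$ below the level at which $\tau$ is willing to stop given what it has already seen, and that, conditionally on $\{\tau=K\}$, the chance that none of the remaining genuine observations falls below $Z$ is small; combined with assumption (\ref{Ass}), which limits how much $q$ can grow over the relevant rank gap, and with the strict gain already present on the complementary event, this should yield $\Delta\le0$ and hence $V_n({\cal C}_n)\le V_{n+1}({\cal C}_{n+1})+\epsilon$. Letting $\epsilon\downarrow0$ finishes.

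The main obstacle is precisely this last estimate. The lemma is not a soft consequence of ``more observations mean more options'': extra observations also stiffen the competition for small ranks, and its content is that on balance the optimal risk goes up. Bounding the exceptional event --- where the phantom is exactly the observation the transferred rule would take and $\sigma$ has nothing comparable remaining --- is the crux; it is the only place where one must exploit that $\tau$ is (near-)optimal rather than arbitrary, and where the growth restriction (\ref{Ass}) on $q$ is needed. (Reformulating the problem through dynamic programming, comparing the minimal risk with $\ell$ observations still to come to that with $\ell+1$, runs into the same point.)
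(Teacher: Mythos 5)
Your reduction from the $(n+1)$-problem to the $n$-problem has a genuine gap at exactly the point you flag as the crux: the comparison $\ex[q(R_\sigma)]\le\ex[q(R_\tau)]$ is never established. On the exceptional event --- $\tau$ stops at the phantom $Z$ and no later genuine observation lies below $Z$ --- your $\sigma$ defaults to $X_n$, whose rank among the genuine observations can be as large as $n$, so the pointwise comparison fails badly there. The claim $\Delta\le 0$ is only asserted; the ingredients you invoke to support it do not obviously combine into a proof. Near-optimality of $\tau$ does not confine $Z$ in any quantitative way you have stated; the ``strict gain on the complementary event'' is not quantified (in the routine case you only get $q(R_\sigma)\le q(R_\tau)$, with no guaranteed surplus to spend); and assumption (\ref{Ass}) is irrelevant for finite $n$, since $q$ restricted to $[n+1]$ is bounded anyway, as you yourself note at the outset. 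There is also a secondary problem in the ${\cal M}$-case: after the phantom stops, your continuation threshold $Z$ may be smaller than the thresholds already used, so the conditional rule given $(Z,K)$ need not have increasing thresholds as in (\ref{mless}); this is repairable by a rearrangement argument, but it is not ``nothing to check''.

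The difficulty is an artifact of inserting a phantom with a \emph{uniform} value. The paper (following Assaf and Samuel-Cahn for the ${\cal M}$-case, and citing the standard references for ${\cal F}$ and ${\cal R}$) runs the coupling in the opposite direction and deletes the \emph{maximum} $X_{n+1,n+1}$ from the $(n+1)$-sample. This choice kills the exceptional event at the root: removing the largest observation changes no other final rank, stopping at it is the worst possible decision (loss $q(n+1)$), so the modified rule $\tau'$ that skips it and otherwise keeps the same thresholds is at least as good as $\tau$, and strictly better on the positive-probability event that $\tau$ stops there. Conditionally on $X_{n+1,n+1}=x$ and its arrival position, the remaining $n$ observations are iid uniform on $[0,x]$, so $\tau'$ acts as a mixture of admissible $n$-rules and its risk dominates $V_n$. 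No error term ever arises, and no appeal to optimality of $\tau$ beyond its mere admissibility is needed for the weak inequality. If you want to keep your insertion picture, the fix is the same: make the phantom the maximum of the $n+1$ values (i.e., condition the inserted value to exceed all genuine ones and reweight), rather than an independent uniform.
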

\proof 
This all is  standard, see the references above. We only add small details to \cite[Theorem 2.4]{Assaf} for the $\cal M$-case.
Let $\tau$ be an optimal memoryless rule in the problem of size $n+1$, and let $\tau'$ be a modified memoryless strategy which
always skips the worst value $X_{n+1,n+1}$ but otherwise has the same thresholds as $\tau$.
(To apply $\tau'$ the observer must be able to recognise $X_{n+1,n+1}$ as it arrives.) Then $\tau'$ strictly improves
$\tau$ in the event that $\tau$ stops at $X_{n+1,n+1}$. On the other hand, strategy $\tau'$ performs as a mixture of memoryless rules
 in the problem of size $n$,
 because given $X_{n+1,n+1}=x$
the other $X_j$'s are iid uniform on $[0,x]$.
Therefore $V_n({\cal M}_n)<V_{n+1}({\cal M}_{n+1})$.
\endpf

\end{document}